\numberwithin{equation}{section}
\patchcmd{\ttlh@hang}{\parindent\z@}{\parindent\z@\leavevmode}{}{}
\patchcmd{\ttlh@hang}{\noindent}{}{}{}
\newcommand\numberthis{\addtocounter{equation}{1}\tag{\theequation}}
\theoremstyle{plain}
\newtheorem{theorem}{Theorem}[section]
\newtheorem{lemma}[theorem]{Lemma}
\newtheorem{proposition}[theorem]{Proposition}
\newtheorem{corollary}[theorem]{Corollary}
\theoremstyle{definition}
\newtheorem{definition}[theorem]{Definition}
\newenvironment{example}
  {\pushQED{\qed}\examplex}
  {\popQED\endexamplex}
\theoremstyle{remark}
\DeclareMathOperator{\Rel}{Rel}
\DeclareMathOperator{\loc}{loc}
\DeclareMathOperator*{\esssup}{ess\,sup}
\newcommand{\coeff}{C_{g, \Lambda}}
\newcommand{\frameop}{S_{g, \Lambda}}
\newcommand{\recon}{D_{g, \Lambda}}
\newcommand{\coeffGam}{C_{g, \Gamma}}
\newcommand{\N}{\mathbb{N}}
\newcommand{\Hpi}{\mathcal{H}_{\pi}}
\title[Overcompleteness of coherent frames for unimodular amenable groups]{Overcompleteness of coherent frames \\ for unimodular amenable groups}
\author{Martijn Caspers}
\author{Jordy Timo van Velthoven}
\address{Delft University of Technology,
Mekelweg 4, Building 36,
2628 CD Delft, The Netherlands.}
\email{m.p.t.caspers@tudelft.nl}
\email{j.t.vanvelthoven@tudelft.nl}
\keywords{Beurling density, coherent system, frame, overcompleteness}
\subjclass[2020]{42C30, 46B15}
\begin{document}

\maketitle

\begin{abstract}
This paper concerns the overcompleteness of coherent frames for unimodular amenable groups. It is shown that for coherent frames associated with a localized vector a set of positive Beurling density can be removed yet still leave a frame.
The obtained results extend various theorems of [J. Fourier Anal. Appl., 12(3):307-344, 2006] to frames with non-Abelian index sets.
\end{abstract}

\section{Introduction}
The aim of this paper is to provide quantitative results on the overcompleteness of a frame in the orbit of a square-integrable representation $(\pi, \Hpi)$ of an amenable unimodular group $G$, i.e., a family of the form
\begin{align} \label{eq:coherent}
 \pi(\Lambda) g = (\pi(\lambda) g)_{\lambda \in \Lambda}
\end{align}
for a vector $g \in \Hpi$ and a discrete $\Lambda \subseteq G$ satisfying the frame inequalities
\[
A \| f \|_{\Hpi}^2 \leq \sum_{\lambda \in \Lambda} |\langle f, \pi(\lambda) g \rangle |^2 \leq B \| f \|_{\Hpi}^2 \quad \text{for all} \quad f \in \Hpi,
\]
for some constants $0 < A \leq B < \infty$. Clearly, any such system is complete in $\Hpi$, i.e., its span is dense in $\Hpi$.
A frame is called \emph{exact} (or a \emph{Riesz basis}) if it ceases to be a frame after the removal of an arbitrary element and is called \emph{overcomplete}, otherwise. The removal of a vector from a frame leaves either a frame or an incomplete system, see, e.g., \cite{duffin1952class}.

For $G = \mathbb{R}^{2d}$ and $\pi$ being the projective Schr\"odinger representation on $L^2 (\mathbb{R}^d)$, the overcompleteness of coherent frames $\pi(\Lambda) g$ (so-called \emph{Gabor frames}) is well-understood through the quantitative framework  \cite{balan2006density,balan2006density2}. Among others, the theory \cite{balan2006density,balan2006density2} provides density conditions for frames and Riesz bases and criteria under which infinite sets can be removed yet still leave a frame.  For possibly non-Abelian groups $G$, density conditions for frames of the form \eqref{eq:coherent} have been obtained more recently in, e.g., \cite{fuehr2017density, enstad2022coherent,enstad2022dynamical, mitkovski2020density, grochenig2020balian}. These density conditions (see also Corollary \ref{cor:density_frames}) assert that
if $\pi(\Lambda) g$ is a frame for $\Hpi$ with an $L^2$-localized vector $g \in \mathcal{B}_{\pi}^2$ (cf. Section \ref{sec:integrable_rep}), then the associated lower Beurling density $D^- (\Lambda)$ of $\Lambda$ satisfies
\begin{align} \label{eq:density_intro}
 D^-(\Lambda) := \lim_{n \to \infty} \inf_{x \in G} \frac{\# (\Lambda \cap xK_n)}{\mu_G (K_n)} \geq d_{\pi},
\end{align}
where $(K_n)_{n \in \mathbb{N}}$ is any strong F\o lner sequence and $d_{\pi} > 0$ the formal degree of $\pi$; see Section \ref{sec:notation}. In addition, necessarily $D^- (\Lambda) = d_{\pi}$ whenever $\pi(\Lambda) g$ is an exact frame.  For (classes of) nilpotent groups $G$, it is also known that for a frame $\pi(\Lambda) g$ with an $L^1$-localized vector $g \in \mathcal{B}_{\pi}^1$, the inequality \eqref{eq:density_intro} must be strict (cf. \cite{grochenig2020balian, groechenig2015deformation, ascensi2014dilation}), so that $\pi(\Lambda) g$ is necessarily overcomplete.

The main result of the present paper provides a criterion for a coherent frame under which a set of positive density can be removed yet leave a frame.

\begin{theorem} \label{thm:intro}
Let $G$ be a second-countable unimodular amenable group with an integrable irreducible projective representation $(\pi, \Hpi)$ of formal degree $d_{\pi} > 0$. Let $\Lambda \subseteq G$ be discrete.

Suppose $\pi(\Lambda) g$ is a frame for $\Hpi$ with $g \in \mathcal{B}_{\pi}^1$ and $D^- (\Lambda) > d_{\pi}$. Then there exists $\Gamma \subseteq \Lambda$ with $D^- (\Gamma) > 0$ such that $(\pi(\lambda) g)_{\lambda \in \Lambda \setminus \Gamma}$ is a frame for $\Hpi$.
\end{theorem}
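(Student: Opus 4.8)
The plan is to single out a positive-density set $\Gamma$ of ``redundant'' indices on which the canonical (localized) dual window frame is small, and then conclude by a Schur-test estimate. Throughout write $\frameop=\recon\coeff$ for the frame operator of $\pi(\Lambda)g$, so that $A\,\mathrm{Id}\le\frameop\le B\,\mathrm{Id}$, and set $\tilde f_\lambda:=\frameop^{-1/2}\pi(\lambda)g$; then $(\tilde f_\lambda)_{\lambda\in\Lambda}$ is the canonical Parseval frame of $\Hpi$, and its Gram matrix $P:=\coeff\,\frameop^{-1}\recon=\bigl[\langle\tilde f_\mu,\tilde f_\lambda\rangle\bigr]_{\lambda,\mu\in\Lambda}$ is the orthogonal projection of $\ell^2(\Lambda)$ onto the range of $\coeff$. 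The argument rests on two inputs coming from the integrability of $g$ and the analysis behind Corollary~\ref{cor:density_frames}. \textbf{(i) Localization of $P$.} Since $P$ is the spectral projection of the positive Gram matrix $\coeff\recon$ of $\pi(\Lambda)g$ for the part of its spectrum contained in $[A,B]$ --- which is isolated from $0$ --- it is a holomorphic function of $\coeff\recon$; and for $g\in\mathcal B_\pi^1$ the matrix $\coeff\recon$ lies in an inverse-closed Banach $\ast$-algebra of $\mathcal B_\pi^1$-localized matrices over the relatively separated set $\Lambda$, so $P$ does too. Hence $|P(\lambda,\mu)|\le\Phi(\lambda^{-1}\mu)$ for a fixed function $\Phi$ on $G$ in a suitable Wiener amalgam space, for which $\sup_{\gamma}\sum_{\gamma'\in\Gamma'\setminus\{\gamma\}}\Phi(\gamma^{-1}\gamma')\to0$ as the separation of $\Gamma'$ grows, uniformly over relatively separated $\Gamma'\subseteq\Lambda$. \textbf{(ii) Trace bound.} The diagonal entries $c_\lambda:=P(\lambda,\lambda)=\langle\frameop^{-1}\pi(\lambda)g,\pi(\lambda)g\rangle$ satisfy $0\le c_\lambda\le1$ and $\limsup_{n\to\infty}\sup_{x\in G}\mu_G(K_n)^{-1}\sum_{\lambda\in\Lambda\cap xK_n}c_\lambda\le d_\pi$ along any strong F\o lner sequence $(K_n)_n$; this is precisely the trace estimate producing the density bound $D^-(\Lambda)\ge d_\pi$.

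The heart of the proof is a counting step. Since $D^-(\Lambda)>d_\pi$, fix $\delta\in(0,1)$ with $d_\pi/(1-\delta)<D^-(\Lambda)$ and put $E:=\{\lambda\in\Lambda:c_\lambda\le1-\delta\}$. From $\#\{\lambda\in\Lambda\cap xK_n:c_\lambda>1-\delta\}\le(1-\delta)^{-1}\sum_{\lambda\in\Lambda\cap xK_n}c_\lambda$, taking $\liminf_n\inf_x$ and using (ii) yields
\[
D^-(E)\ \ge\ D^-(\Lambda)-\frac{d_\pi}{1-\delta}\ >\ 0,
\]
so a positive density of indices carry at most the fraction $1-\delta$ of a Parseval vector. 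Now $\Lambda$ is relatively separated (from the Bessel bound together with the continuity of the matrix coefficient of $g$); using this and (i), fix $R>0$ so large that $\sup_{\gamma\in\Gamma'}\sum_{\gamma'\in\Gamma'\setminus\{\gamma\}}\Phi(\gamma^{-1}\gamma')<\delta/2$ for every $R$-separated subset $\Gamma'$ of $\Lambda$, and let $\Gamma\subseteq E$ be a maximal $R$-separated subset (Zorn). Maximality gives $E\subseteq\bigcup_{\gamma\in\Gamma}\gamma B_R$, hence $\#(E\cap xK_n)\le\bigl(\sup_y\#(\Lambda\cap yB_R)\bigr)\cdot\#(\Gamma\cap xK_nB_R)$; since $\sup_y\#(\Lambda\cap yB_R)<\infty$ and $\mu_G(K_nB_R)/\mu_G(K_n)\to1$, this gives $D^-(\Gamma)\ge D^-(E)/\sup_y\#(\Lambda\cap yB_R)>0$.

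Finally, the Hermitian matrix $M:=\bigl[P(\gamma,\gamma')\bigr]_{\gamma,\gamma'\in\Gamma}$ has all diagonal entries $\le1-\delta$ (as $\Gamma\subseteq E$) and, by the choice of $R$, all off-diagonal row sums $<\delta/2$, so the Schur test gives $\|M\|_{B(\ell^2(\Gamma))}\le1-\delta/2$. Since $M$ is the Gram matrix of the Parseval subsystem $(\tilde f_\gamma)_{\gamma\in\Gamma}$, its frame operator $\tilde S_\Gamma:=\sum_{\gamma\in\Gamma}\langle\,\cdot\,,\tilde f_\gamma\rangle\tilde f_\gamma=\frameop^{-1/2}\frameopGam\frameop^{-1/2}$ satisfies $0\le\tilde S_\Gamma\le(1-\tfrac{\delta}{2})\,\mathrm{Id}$. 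Consequently the frame operator of the remaining system obeys
\[
S_{g,\Lambda\setminus\Gamma}=\frameop-\frameopGam=\frameop^{1/2}\bigl(\mathrm{Id}-\tilde S_\Gamma\bigr)\frameop^{1/2}\ \ge\ \tfrac{\delta}{2}\,\frameop\ \ge\ \tfrac{\delta A}{2}\,\mathrm{Id},
\]
while $S_{g,\Lambda\setminus\Gamma}\le\frameop\le B\,\mathrm{Id}$. Hence $(\pi(\lambda)g)_{\lambda\in\Lambda\setminus\Gamma}$ is a frame for $\Hpi$ with $D^-(\Gamma)>0$, as claimed.

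The main obstacle is input (i): the localization of the \emph{canonical} dual system, equivalently the spectral invariance of the matrix algebra attached to $\mathcal B_\pi^1$-vectors over a relatively separated index set. This is the point at which $g\in\mathcal B_\pi^1$ --- rather than merely $g\in\mathcal B_\pi^2$ --- is genuinely used; input (ii) is the other essential ingredient and should be readable off the proof of the density theorem. Granting these, the counting producing $D^-(E)>0$, the extraction of a separated subset of positive density, and the Schur estimate are routine.
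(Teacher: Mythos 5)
Your overall architecture --- single out a positive-density set $E$ where the diagonal $c_\lambda=\langle \pi(\lambda)g,\frameop^{-1}\pi(\lambda)g\rangle$ is bounded away from $1$ (via the trace/frame-measure identity), thin it to a separated subset $\Gamma$ of positive density, and kill the off-diagonal by a Schur estimate --- is exactly the paper's strategy, and your counting step, the extraction of $\Gamma$, and the final Schur/frame-operator computation are all sound (your input (ii) is precisely what Step 3 of the proof of Theorem \ref{thm:fundamental_identity} delivers, and your identity $S_{g,\Lambda\setminus\Gamma}=\frameop^{1/2}(\mathrm{Id}-\tilde S_\Gamma)\frameop^{1/2}$ is equivalent to Lemma \ref{lem:positive_density_removal}).

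The genuine gap is your input (i). You assume that the Gram matrix $P=\coeff\frameop^{-1}\coeff^{*}$ of the canonical Parseval frame inherits an envelope $|P(\lambda,\mu)|\le\Phi(\lambda^{-1}\mu)$ with $\Phi$ in a Wiener amalgam space, by arguing that $P$ is a holomorphic function of $\coeff\recon$ inside an \emph{inverse-closed} Banach $\ast$-algebra of convolution-dominated matrices over $\Lambda$. No such spectral-invariance (Wiener-type) result is available for general amenable unimodular groups: the known inverse-closedness theorems for convolution-dominated matrices require polynomial (or at least subexponential) growth, whereas the theorem here allows exponential growth. This is not a presentational quibble --- avoiding exactly this step is the stated technical point of the paper. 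The repair is local and is what the paper does: rescale so that $B<2$, expand $\frameop^{-1}=\sum_{j\ge0}(I-\frameop)^{j}$, and replace $P$ by the \emph{finite} truncation $M^{(N)}=\sum_{j=0}^{N}(I-\coeff\coeff^{*})^{j}\coeff\coeff^{*}$. A finite sum of products of convolution-dominated matrices is again convolution-dominated (no inverse-closedness needed, only the Banach-algebra property of \cite[Proposition 4.6]{romero2021dual}), so $M^{(N)}$ admits an envelope $\Theta\in W(L^1)$ and your separation/Schur argument applies to it; the tail $P-M^{(N)}$ is controlled in operator norm only, which suffices because $\|P_{\Gamma}(P-M^{(N)})P_{\Gamma}\|\le\|P-M^{(N)}\|\le\varepsilon$ survives compression to $\Gamma$ (one must also separately absorb the diagonal of the tail, as the paper does). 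With that substitution your proof closes; as written, it rests on an unproved and, in this generality, unavailable hypothesis.
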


In addition to Theorem \ref{thm:intro}, the present paper also provides a necessary condition for positive density removal (see Proposition \ref{prop:positive_density_removal}).
Both results extend corresponding theorems of \cite{balan2006density, balan2006density2} to frames arising from possibly non-Abelian groups.
Theorem \ref{thm:intro} applies, in particular, to smooth vectors of square-integrable representations of nilpotent Lie groups (cf. Example \ref{ex:nilpotent}), but also to unimodular groups with possibly exponential growth.
Necessary density conditions for frames arising from nonunimodular groups (e.g., the affine or $ax + b$ group) form currently an open problem.

The possibility of removing sets from an adequate frame yet still leaving a frame can also be deduced from the main results on abstract frames in \cite{freeman2019discretization}; see, e.g., \cite[Corollary 1.5]{freeman2019discretization}. These results do, however, not provide information on the quantity that can be removed, which is the key contribution of Theorem \ref{thm:intro}.

Our proof of Theorem \ref{thm:intro} follows the overall proof structure of the corresponding result for Gabor frames in $L^2 (\mathbb{R}^d)$ (cf. \cite{balan2006density, balan2006density2}). The key ingredients are an identity relating frame measure and Beurling density (Theorem \ref{thm:fundamental_identity}) and a suitable truncation of a Gram matrix (see Lemma \ref{lem:positive_density_removal}). Despite these similarities, there are several important steps that require new methods and techniques in the case of non-Abelian groups. For example, in the setting of general amenable groups, the existence of an adequate ``reference system'' forming a Riesz basis is unknown\footnote{See \cite{groechenig2018orthonormal, oussa2019compactly, oussa2022orthonormal} for constructions of orthonormal bases in the orbit of (classes of) nilpotent Lie groups.} and techniques based on the spectral invariance of matrix algebras are not available in settings with exponential growth; see \cite{fendler2016on, tessera2011inclusion} for examples of settings in which spectral invariance fails. The alternative arguments provided by the present paper to circumvent these obstructions are considered as the main technical contribution and appear to yield more direct proofs even in the case of Abelian index sets.

Lastly, it should be mentioned that for a Gabor frame $\pi(\Lambda) g$ for $L^2 (\mathbb{R}^d)$, in addition to Theorem \ref{thm:intro}, it is possible to choose $\Gamma \subseteq \Lambda$ such that the density of $\Lambda \setminus \Gamma$ is arbitrary close to $d_{\pi} = 1$, see \cite{balan2011redundancy}.
A similar statement for non-Abelian groups remains an open problem.

The paper is organized as follows. Section \ref{sec:notation} provides preliminaries on F\o lner sequences, integrable representations and frames.
 In Section \ref{sec:frame_measure} the notion of a frame measure is introduced and related to formal degree and Beurling density. The main results on overcomplete coherent frames are proven in Section \ref{sec:overcompleteness}.

\section{Notation and preliminary results} \label{sec:notation}
Let $G$ be a second-countable unimodular locally compact group with Haar measure $\mu_G$. Throughout, we fix a compact symmetric unit neighborhood $Q \subseteq G$.

\subsection{F\o lner sequences} \label{sec:foelner}
A \emph{(right) F\o lner sequence} is a sequence $(K_n)_{n \in \mathbb{N}}$ of nonnull compact sets $K_n \subseteq G$ satisfying, for all compact sets $K \subseteq G$,
\[
\lim_{n \to \infty} \frac{\mu_G (K_n K \Delta K_n)}{\mu_G (K_n)} = 0.
\]
The locally compact group $G$ is called \emph{amenable} if it admits a F\o lner sequence. For an amenable group $G$, a F\o lner sequence can be chosen to satisfy the additional properties
\begin{align} \label{eq:nested_folner}
K_n \subseteq K_{n+1} \quad \text{and} \quad G = \bigcup_{n \in \mathbb{N}} K_n,
\end{align}
see, e.g., \cite[Theorem 3.2.1]{emerson1967covering}.

A \emph{(right) strong F\o lner sequence} is a F\o lner sequence $(K_n)_{n \in \N}$ satisfying the stronger condition
\begin{align} \label{eq:strong_folner}
\lim_{n \to \infty} \frac{\mu_G ( K_n K \cap K_n^c K)}{\mu_G (K_n)} = 0
\end{align}
for all compact sets $K \subseteq G$. If $(K_n)_{n \in \mathbb{N}}$ is a F\o lner sequence and $U \subseteq G$ is a compact symmetric unit neighborhood, then $(K_n U)_{n \in \mathbb{N}}$ is a strong F\o lner sequence (cf. \cite[Proposition 5.10]{pogorzelski2021leptin}).
Clearly, also strong F\o lner sequences exist with the additional properties \eqref{eq:nested_folner}.

\subsection{Discrete sets}
A set $\Lambda \subseteq G$ is called \emph{relatively separated} if, for some (all) compact unit neighborhoods $U \subseteq G$,
\[
 \sup_{x \in G} \# (\Lambda \cap x U) < \infty.
\]
For a relatively separated $\Lambda$, its relative separation (relative to the fixed neighborhood $Q$) is defined to be $\Rel(\Lambda) := \sup_{x \in G} \# (\Lambda \cap x Q) < \infty$.
Given a compact unit neighborhood $U$, a set $\Lambda \subseteq G$ is called \emph{$U$-dense} if $G = \bigcup_{\lambda \in \Lambda} \lambda U$. Equivalently, $\Lambda$ is $U$-dense if $\# (\Lambda \cap x U) \geq 1$ for all $x \in G$.
A set is \emph{relatively dense} if it is $U$-dense for some compact unit neighborhood $U$.

\subsection{Local maximal functions}
For $F \in L^{\infty}_{\loc} (G)$, its (left-sided) local maximal function $M^L F : G \to [0, \infty)$ is defined by
\[
M^L F (x) = \esssup_{z \in Q} |F(x z)|, \quad x \in G.
\]
The associated (left-sided) Wiener amalgam space $W^L (L^p)$, with $p \in [1, 2]$, is defined by
\[
W^L (L^p):= \big\{ F \in L^{\infty}_{\loc} (G) : M^L F \in L^p (G) \big\}.
\]
Each space $W^L (L^p)$, $p \in [1, 2]$,  satisfies $W^L (L^p) \hookrightarrow L^p$, and additionally $W^L (L^p) \hookrightarrow L^{\infty}$.

The following restriction property will be essential in the sequel, see, e.g., \cite[Lemma 1]{grochenig2008homogeneous}.

\begin{lemma} \label{lem:restriction}
Let $\Lambda \subseteq G$ be relatively separated and let $F \in W^L(L^2)$ be continuous. For any compact set $K \subseteq G$,
\[
 \sum_{\lambda \in \Lambda \cap K^c} | F(\lambda)|^2 \leq \frac{\Rel (\Lambda)}{\mu_G (Q)} \int_{K^c Q} |M^L F(x)|^2 \; d\mu_G (x).
\]
In particular, for every $\varepsilon > 0$, there exists compact $K \subseteq G$ such that $\sum_{\lambda \in \Lambda \cap K^c} |F(\lambda) |^2 \leq \varepsilon$.
\end{lemma}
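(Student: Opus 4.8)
The plan is to estimate each summand $|F(\lambda)|^2$ by the average of $|M^L F|^2$ over the translate $\lambda Q$, to use that these translates overlap at most $\Rel(\Lambda)$ times, and then to sum over $\lambda \in \Lambda \cap K^c$; the final assertion will follow by dominated convergence.

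\emph{Step 1 (a local average bound).} Fix $\lambda \in \Lambda$. Since $F$ is continuous and $Q$ is a symmetric unit neighborhood, I would first show that $M^L F(x) \geq |F(\lambda)|$ for $\mu_G$-a.e.\ $x \in \lambda Q$: writing $x = \lambda q$ with $q \in Q$ gives $\lambda = x q^{-1}$ with $q^{-1} \in Q$, and by continuity of $F$ the value $|F(x q^{-1})|$ is dominated by $\esssup_{z \in Q} |F(x z)| = M^L F(x)$ whenever $q^{-1}$ lies in the measure-theoretic support of $\mathds{1}_Q \mu_G$ — which, $G$ being second countable, excludes only a $\mu_G$-null set of $q$, hence of $x = \lambda q \in \lambda Q$. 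By left-invariance of $\mu_G$ this yields
\[
 |F(\lambda)|^2 \;=\; \frac{1}{\mu_G(Q)} \int_{\lambda Q} |F(\lambda)|^2 \, d\mu_G(x) \;\leq\; \frac{1}{\mu_G(Q)} \int_{\lambda Q} |M^L F(x)|^2 \, d\mu_G(x).
\]

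\emph{Steps 2--3 (bounded overlap and summation).} Since $Q$ is symmetric, for every $x \in G$ the set $\{\lambda \in \Lambda : x \in \lambda Q\}$ coincides with $\Lambda \cap xQ$, so $\sum_{\lambda \in \Lambda} \mathds{1}_{\lambda Q} \leq \Rel(\Lambda)$ pointwise; moreover $\lambda \in K^c$ forces $\lambda Q \subseteq K^c Q$, so $\sum_{\lambda \in \Lambda \cap K^c} \mathds{1}_{\lambda Q}$ is supported in $K^c Q$. Summing the bound of Step 1 over $\lambda \in \Lambda \cap K^c$ and interchanging sum and integral (legitimate by Tonelli, all terms being nonnegative),
\begin{align*}
 \sum_{\lambda \in \Lambda \cap K^c} |F(\lambda)|^2
 &\leq \frac{1}{\mu_G(Q)} \int_{K^c Q} \Big( \sum_{\lambda \in \Lambda \cap K^c} \mathds{1}_{\lambda Q}(x) \Big) |M^L F(x)|^2 \, d\mu_G(x) \\
 &\leq \frac{\Rel(\Lambda)}{\mu_G(Q)} \int_{K^c Q} |M^L F(x)|^2 \, d\mu_G(x),
\end{align*}
which is the claimed estimate.

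\emph{Step 4 and main obstacle.} For the ``in particular'' part, note that $|M^L F|^2 \in L^1(G)$ because $F \in W^L(L^2)$; fixing a compact exhaustion $C_n \uparrow G$, the sets $C_n^c Q$ decrease to $\emptyset$ (any $x$ has $xQ$ compact, so $xQ \subseteq C_n$ and hence $x \notin C_n^c Q$ for $n$ large, using symmetry of $Q$), whence $\int_{C_n^c Q} |M^L F|^2 \to 0$ by dominated convergence; taking $K = C_n$ with $n$ large enough makes the right-hand side of the displayed inequality at most $\veps$. The only genuinely delicate point is Step 1 — replacing the essential supremum defining $M^L F$ by an honest pointwise bound on $|F(\lambda)|$ — which is exactly where continuity of $F$, rather than mere local boundedness, is indispensable; the remaining steps are routine bookkeeping with Haar invariance and the bounded-overlap property of relatively separated sets.
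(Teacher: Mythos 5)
Your proof is correct, and it is essentially the standard argument: the paper does not prove this lemma but cites \cite[Lemma 1]{grochenig2008homogeneous}, whose proof is exactly this local-average/bounded-overlap computation. The one delicate point — upgrading the essential supremum to a pointwise bound at $\lambda$ via continuity of $F$ and second countability, up to a null set of $x \in \lambda Q$ — is handled correctly, and the rest (overlap bound $\sum_{\lambda}\mathds{1}_{\lambda Q} \leq \Rel(\Lambda)$, support in $K^cQ$, Tonelli, and dominated convergence along a compact exhaustion with $C_n \subseteq \operatorname{int}(C_{n+1})$) is routine and sound.
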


In a similar fashion as above, the right-sided local maximal function $M^R L$ of $F \in L^{\infty}_{\loc} (G)$ is defined by $M^R F (x) = \esssup_{z \in Q} |F(zx)|$. The associated (two-sided) Wiener amalgam space $W(L^1)$ is defined as
\[
W(L^1) := \big\{ F \in L^{\infty}_{\loc} (G) : M^L M^R F \in L^1 (G) \big\}
\]
and equipped with the norm $\| F \|_W := \| M^L M^R F \|_{L^1}$.

The local maximal functions satisfy
\begin{align} \label{eq:maximal_convolution}
M^L (F_1 \ast F_2) \leq |F_1| \ast M^L F_2 \quad \text{and} \quad M^R (F_1 \ast F_2) \leq M^R F_1 \ast |F_2|,
\end{align}
provided the convolution product $F_1 \ast F_2$ is (almost everywhere) well-defined. In particular, the inequalities \eqref{eq:maximal_convolution} imply that $(W^L (L^1))^{\vee} \ast W^L(L^1) \hookrightarrow W(L^1)$, where the involution $^\vee$ is defined as $F^{\vee} (x) = F(x^{-1})$ for $x \in G$.

\subsection{Integrable representations} \label{sec:integrable_rep}
A projective unitary representation $(\pi, \Hpi)$ on a Hilbert space $\Hpi$ is a strongly measurable map $\pi : G \to \mathcal{U}(\Hpi)$ satisfying
\[
\pi(xy) = \sigma(x,y) \pi(x) \pi(y), \quad x,y \in G,
\]
for a function $\sigma : G \times G \to \mathbb{T}$.  For a vector $g \in \Hpi$, the associated coefficient transform $V_g : \Hpi \to L^{\infty} (G)$ is defined through the matrix coefficients
\[
V_g f (x) = \langle f, \pi(x) g \rangle, \quad x \in G.
\]
The absolute value $|V_g f| : G \to [0, \infty)$ is continuous for all $f, g \in \Hpi$; see \cite[Theorem 7.5]{varadarajan1985geometry}.

A projective representation $(\pi, \Hpi)$ is said to be \emph{irreducible} if $\{0\}$ and $\Hpi$ are the only closed subspaces of $\Hpi$ invariant under all operators $\pi(x)$ for $x \in G$.

An irreducible projective representation $(\pi, \Hpi)$ is called \emph{square-integrable} or a \emph{discrete series representation} of $G$ if there exists $g \in \Hpi \setminus \{0\}$ such that
\begin{align*}
\int_G |V_g g (x) |^2 \; d \mu_G (x) = \int_G | \langle g, \pi(x) g \rangle |^2 \; d\mu_G (x) < \infty.
\end{align*}
The significance of a discrete series representation $(\pi, \Hpi)$ of $G$ is that there exists $d_{\pi} > 0$, called the \emph{formal degree} of $\pi$, such that
\begin{align} \label{eq:ortho}
\int_G \langle f_1, \pi(x) g_1 \rangle \langle \pi(x) g_2, f_2 \rangle \; d\mu_G (x) = d_{\pi}^{-1} \langle f_1, f_2 \rangle \overline{\langle g_1, g_2 \rangle}
\end{align}
for all $f_1, f_2, g_1, g_2 \in \Hpi$. For a square-integrable representation $\pi$, we define the subspace
\[
\mathcal{B}^2_{\pi} := \big\{ g \in \Hpi : V_g g \in W^L (L^2) \big\}.
\]
Then $\mathcal{B}^2_{\pi}$ is nonzero and norm dense in $\Hpi$, see, e.g., \cite{fuehr2007sampling, grochenig2008homogeneous}.

In addition to square-integrability, a vector $g \in \Hpi \setminus \{0\}$ satisfying
$
\int_G | V_g g (x) | \; d\mu_G (x) < \infty$ is called an \emph{integrable vector}. An \emph{integrable representation} is an irreducible representation admitting an integrable vector.
For an integrable representation $\pi$, we also consider the subspace
\[
\mathcal{B}^1_{\pi} := \big\{ g \in \Hpi : V_g g \in W^L (L^1) \big\}.
\]
If $g \in \Hpi$ is an integrable vector and $h \in C_c (G) \setminus \{0\}$, then the associated G\aa rding vector $\pi(h) g := \int_G h(x) \pi(x) g \; d\mu_G (x)$ defines an element of $\mathcal{B}^1_{\pi}$. Therefore, the space $\mathcal{B}^1_{\pi}$ is nonzero and norm dense in $\Hpi$.

The following simple lemma will be used below.

\begin{lemma} \label{lem:bettervectors}
Let $(\pi, \Hpi)$ be an irreducible integrable representation of $G$. Then $\mathcal{B}^1_{\pi} \subseteq \mathcal{B}^2_{\pi}$ and
$
\mathcal{B}^1_{\pi} = \{ g \in \Hpi : V_g g \in W(L^1) \}.
$
\end{lemma}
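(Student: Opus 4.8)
The plan is to establish the two claims separately, relying throughout on the convolution relation $(W^L(L^1))^{\vee} \ast W^L(L^1) \hookrightarrow W(L^1)$ noted after \eqref{eq:maximal_convolution} and on the reproducing formula supplied by \eqref{eq:ortho}. Fix $g \in \mathcal{B}^1_{\pi}$, so that $V_g g \in W^L(L^1)$; after rescaling we may assume $d_{\pi} = 1$ for notational convenience, or simply carry the constant $d_{\pi}^{-1}$ along.

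For the identity $\mathcal{B}^1_{\pi} = \{ g \in \Hpi : V_g g \in W(L^1)\}$, the inclusion $\supseteq$ is immediate since $W(L^1) \hookrightarrow W^L(L^1)$ (indeed $M^L F \le M^L M^R F$ pointwise, as $Q$ is a unit neighborhood). For $\subseteq$, I would write the orthogonality relation \eqref{eq:ortho} with $f_1 = f_2 = f$ and $g_1 = g_2 = g$ in convolution form: for $f \in \Hpi$ one has the pointwise identity
\[
V_g f (x) = d_{\pi}\, (V_g f \ast V_g g)(x), \qquad x \in G,
\]
which follows by expanding $\langle f, \pi(x) g\rangle$ using $\langle f, \pi(y)g\rangle\langle \pi(y)g, \pi(x)g\rangle$ integrated over $y$ and the identity $\langle \pi(y)g,\pi(x)g\rangle = \overline{\sigma}\cdot V_g g(y^{-1}x)$ up to a phase; the phase factors cancel in absolute value, which is all we need. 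Taking $f = g$ gives $V_g g = d_{\pi}\,(V_g g) \ast (V_g g)$. Now apply \eqref{eq:maximal_convolution}: since $(V_g g)^{\vee} \in W^L(L^1)$ as well (because $|V_g g(x^{-1})| = |V_{g} g(x)|$ up to a phase, using unimodularity and $\langle g,\pi(x^{-1})g\rangle = \overline{\langle \pi(x^{-1})^{-1}g, g\rangle}$), the product $(V_g g)^{\vee} \ast (V_g g)$ lies in $W(L^1)$, and hence so does $V_g g = d_\pi (V_g g)^{\vee\vee}\ast(V_g g)$ after noting $(V_g g)^{\vee\vee} = V_g g$. This proves $g \in \mathcal{B}^1_\pi \implies V_g g \in W(L^1)$.

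For $\mathcal{B}^1_{\pi} \subseteq \mathcal{B}^2_{\pi}$, I would argue that $V_g g \in W^L(L^1) \subseteq L^1 \cap L^\infty$ (the embeddings recorded for $W^L(L^p)$), hence $V_g g \in L^2$; to upgrade this to $M^L(V_g g) \in L^2$, use again the convolution identity $V_g g = d_\pi (V_g g) \ast (V_g g)$ together with $M^L(F_1 \ast F_2) \le |F_1| \ast M^L F_2$, so that $M^L(V_g g) \le d_\pi\, |V_g g| \ast M^L(V_g g)$ with $|V_g g| \in L^1$ and $M^L(V_g g) \in L^1 \cap L^\infty \subseteq L^2$; Young's inequality $L^1 \ast L^2 \hookrightarrow L^2$ then yields $M^L(V_g g) \in L^2$, i.e. $g \in \mathcal{B}^2_\pi$.

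The main obstacle is the careful bookkeeping of the cocycle $\sigma$ and the substitution in the convolution identity — one must check that all phase factors are irrelevant because only $|V_g g|$ enters the Wiener-amalgam membership, and that $(V_g g)^{\vee}$ genuinely lies in $W^L(L^1)$, which uses unimodularity of $G$ to pass from left to right maximal functions. Once the reproducing identity is in hand in the form $V_g g = d_\pi (V_g g)\ast(V_g g)$, the rest is a routine application of \eqref{eq:maximal_convolution} and Young's inequality.
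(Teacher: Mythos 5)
Your argument is correct and essentially identical to the paper's: both derive $|V_g g| \le d_\pi \|g\|_{\Hpi}^{-2}\, |V_g g| \ast |V_g g|$ from the orthogonality relations \eqref{eq:ortho} and then apply the maximal-function inequalities \eqref{eq:maximal_convolution} together with $|V_g g|^{\vee} = |V_g g|$ (just keep the factor $\|g\|_{\Hpi}^{-2}$ in the reproducing formula). The only cosmetic differences are that you invoke the packaged embedding $(W^L(L^1))^{\vee} \ast W^L(L^1) \hookrightarrow W(L^1)$ where the paper writes out $M^L M^R V_g g \le C\, (M^R V_g g \ast M^L V_g g)$, and your $L^2$ step is redundant since $M^L V_g g \in L^1 \cap L^{\infty} \subseteq L^2$ already gives $\mathcal{B}^1_{\pi} \subseteq \mathcal{B}^2_{\pi}$ without Young's inequality.
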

\begin{proof}
Let $g \in \mathcal{B}^1_{\pi}$ be nonzero. The orthogonality relations \eqref{eq:ortho} yield
\[
V_g g(x) = d_{\pi} \| g \|^{-2}_{\Hpi} \int_G \langle g , \pi(y) g \rangle \langle \pi(y) g, \pi(x) g \rangle \; d\mu_G (y), \quad x \in G.
\]
Set $C := d_{\pi} \| g \|^{-2}_{\Hpi}$. Then
$
|V_g g | (x) \leq C (|V_g g| \ast |V_g g|)(x)$ for all $x \in G$. By Equation \eqref{eq:maximal_convolution}, it follows therefore that
$ M^L V_g g \leq C (|V_g g| \ast M^L V_g g), $
and thus $L^2 (G) \ast L^1 (G) \hookrightarrow L^2 (G)$ implies that \[ \| M^L V_g g \|_{L^2} \leq C \| V_g g \|_{L^2} \| M^L V_g g \|_{L^1}.\] Similarly, it follows that $M^L M^R V_g g \leq C (M^R V_g g \ast M^L V_g g)$. Since $|V_g g|^{\vee} = |V_g g|$, and hence $M^R V_g g = (M^L V_g g)^{\vee}$, this implies
$\| V_g g \|_{W} \leq C \| M^L V_g g \|_{L^1} \| M^L V_g g\|_{L^1}$.
\end{proof}

Lastly, we mention a class of groups and projective representations for which $\mathcal{B}^1_{\pi}$ is nonzero.

\begin{example} \label{ex:nilpotent}
Let $N$ be a connected, simply connected nilpotent Lie group and let $(\pi, \Hpi)$ be an irreducible unitary representation of $N$. Denote by $\Hpi^{\infty}$ the (dense) subspace of smooth vectors of $\pi$, i.e.,
the space of all vectors $g \in \Hpi$ such that the orbit map $x \mapsto \pi(x) g$ is smooth.

Suppose that $\pi$ is square-integrable modulo the center $Z$ of $N$, meaning that there exists nonzero $g \in \Hpi$ such that
\[
\int_{N/Z} |\langle g, \pi(x) g \rangle |^2 \; d\mu_{N/Z} (x) < \infty.
\]
Then, given a smooth cross-section $s : N/Z \to N$, the mapping $\pi' := \rho \circ s$ forms a (projective) discrete series representation of $G := N/Z$.
Moreover, for any smooth vector $g \in \Hpi^{\infty}$, the function $V_g g  = \langle g , \pi' (\cdot) g \rangle$ is a Schwartz function on $G$ (see \cite[Theorem 4.5.11]{corwin1990representations}), and hence $g \in \mathcal{B}^1_{\pi}$. See, e.g., \cite[Section 6.2]{bedos2022smooth} for further details and properties.

The interested reader is referred to \cite{nielsen1983unitary} for a list of low-dimensional nilpotent Lie groups and explicit realizations of their irreducible repesentations in $L^2 (\mathbb{R}^d)$ for some suitable $d \in \mathbb{N}$.
\end{example}

\subsection{Coherent frames}
Let $(\pi, \Hpi)$ be a square-integrable projective representation of $G$. For a nonzero vector $g \in \Hpi$ and a discrete set $\Lambda \subseteq G$, a family
$
\pi(\Lambda) g = \big ( \pi(\lambda) g )_{ \lambda \in \Lambda }
$
is called a \emph{coherent system} in $\Hpi$. A coherent system $\pi(\Lambda) g$ is called a \emph{frame} for $\Hpi$ if there exist $A, B > 0$, called \emph{frame bounds}, such that
\[
A \| f \|_{\Hpi}^2 \leq \sum_{\lambda \in \Lambda} |\langle f, \pi(\lambda) g \rangle |^2 \leq B \| f \|_{\Hpi}^2, \quad f \in \Hpi.
\]
Equivalently, the system $\pi(\Lambda) g$ is a frame if the \emph{frame operator}
\[
\frameop : \Hpi \to \Hpi, \quad f \mapsto \sum_{\lambda \in \Lambda} \langle f, \pi(\lambda) g \rangle \pi(\lambda) g
\]
is bounded and invertible. If $\pi(\Lambda) g$ is a frame for $\Hpi$ with frame bounds $A$ and $B$, then the system $(h_{\lambda} )_{\lambda \in \Lambda}$ given by $h_{\lambda} := \frameop^{-1} \pi(\lambda) g$ is a frame for $\Hpi$ with frame bounds $B^{-1}$ and $A^{-1}$, called the \emph{canonical dual frame} of $\pi(\Lambda) g$.
The systems $\pi(\Lambda) g$ and $(h_{\lambda} )_{\lambda \in \Lambda}$ satisfy
$
0 < \langle \pi(\lambda) g, h_{\lambda} \rangle \leq 1
$
for all $\lambda \in \Lambda$. A frame for which the frame bounds can be chosen to be  $A=B=1$ is called a \emph{Parseval frame}.

The following well-known covering properties of the index set of a coherent frame will be used below, see, e.g., \cite{groechenig2015deformation, fuehr2017density, enstad2022dynamical} for proofs.

\begin{lemma}
If $\pi(\Lambda) g$ is a frame for $\Hpi$ with $g \in \mathcal{B}_{\pi}^2$, then $\Lambda$ is relatively separated and relatively dense.
\end{lemma}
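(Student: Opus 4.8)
The plan is to test the frame inequalities against the orbit vectors $f = \pi(x) g$, $x \in G$, and to use that the cocycle factors have modulus $1$, so that
\[
 |\langle \pi(x) g, \pi(\lambda) g \rangle| \;=\; |\langle g, \pi(x^{-1}\lambda) g \rangle| \;=\; |V_g g (x^{-1}\lambda)|, \qquad x \in G,\ \lambda \in \Lambda .
\]
For relative separation, since $|V_g g|$ is continuous with $|V_g g(e)| = \| g \|_{\Hpi}^2 > 0$, there is a compact symmetric unit neighborhood $U$ on which $|V_g g| \geq \tfrac12 \| g \|_{\Hpi}^2$. Testing the upper frame inequality against $\pi(x) g$ and keeping only the indices $\lambda \in \Lambda \cap xU$ (for which $x^{-1}\lambda \in U$) gives
\[
 B \| g \|_{\Hpi}^2 \;\geq\; \sum_{\lambda \in \Lambda} |V_g g(x^{-1}\lambda)|^2 \;\geq\; \tfrac14 \| g \|_{\Hpi}^4 \, \# (\Lambda \cap xU),
\]
hence $\sup_{x \in G} \#(\Lambda \cap xU) \leq 4 B \| g \|_{\Hpi}^{-2} < \infty$, which is relative separation.

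For relative density I would argue by contradiction. If $\Lambda$ fails to be $U$-dense for every compact unit neighborhood $U$, then fixing an increasing exhaustion $G = \bigcup_n U_n$ by compact symmetric unit neighborhoods (possible since $G$ is second countable, hence $\sigma$-compact), there is for each $n$ a point $x_n$ with $x_n U_n \cap \Lambda = \emptyset$, i.e. $U_n \cap x_n^{-1}\Lambda = \emptyset$. The lower frame inequality applied to $\pi(x_n) g$, together with the displayed identity, yields
\[
 A \| g \|_{\Hpi}^2 \;\leq\; \sum_{\lambda \in \Lambda} |V_g g(x_n^{-1}\lambda)|^2 \;=\; \sum_{\mu \in x_n^{-1}\Lambda,\ \mu \notin U_n} |V_g g(\mu)|^2 .
\]
Since $\Rel$ is invariant under left translations of $\Lambda$, the set $x_n^{-1}\Lambda$ is relatively separated with $\Rel(x_n^{-1}\Lambda) = \Rel(\Lambda)$, so Lemma \ref{lem:restriction} (applied to $x_n^{-1}\Lambda$, to $F = V_g g$, and to $K = U_n$) bounds the right-hand side by $\frac{\Rel(\Lambda)}{\mu_G(Q)}\int_{U_n^c Q} |M^L V_g g|^2$. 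As $U_n \nearrow G$, for each $y$ one eventually has $yQ \subseteq U_n$, hence $y \notin U_n^c Q$, so $\mathbf{1}_{U_n^c Q} \to 0$ pointwise; since $|M^L V_g g|^2 \in L^1(G)$, dominated convergence gives $\int_{U_n^c Q}|M^L V_g g|^2 \to 0$. This contradicts $A\|g\|_{\Hpi}^2 > 0$, so $\Lambda$ must be relatively dense.

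The step I expect to be the crux is this last tail estimate via Lemma \ref{lem:restriction}, which uses the regularity hypothesis $V_g g \in W^L(L^2)$, i.e. $g \in \mathcal{B}^2_\pi$. This is no restriction in the present paper, where $g \in \mathcal{B}^1_\pi \subseteq \mathcal{B}^2_\pi$ by Lemma \ref{lem:bettervectors}; for a general frame vector $g \in \Hpi$ one first reduces to this case (or replaces the final step by a weak-limit / homogeneous-approximation argument), as carried out in the references cited above. A minor but genuine point in the non-Abelian setting is the systematic use of the \emph{left}-sided amalgam norm together with left translates of $\Lambda$, which is exactly what keeps the relative-separation constant uniform in $n$.
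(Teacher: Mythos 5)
The paper does not actually prove this lemma; it is stated with a pointer to the literature (F\"uhr--Gr\"ochenig--Haimi--Klotz--Romero, Gr\"ochenig--Ortega-Cerd\`a--Romero, Enstad--Raum), so there is no in-paper argument to match yours against. Your proof is the standard one from those references, and the relative-separation half is complete and correct for an arbitrary frame vector $g$: testing the upper frame bound on the orbit $\pi(x)g$ and using continuity of $|V_gg|$ with $|V_gg(e)|=\|g\|_{\Hpi}^2>0$ needs nothing beyond square-integrability.

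Two points on the relative-density half. First, the substantive one, which you already flag yourself: the tail estimate via Lemma \ref{lem:restriction} requires $V_gg\in W^L(L^2)$, i.e.\ $g\in\mathcal{B}^2_\pi$, whereas the lemma as stated in the paper assumes only that $\pi(\Lambda)g$ is a frame. Square-integrability alone gives $V_gg\in L^2(G)$, and for a general continuous $L^2$ function the quantity $\sup_{x}\sum_{\mu\in x^{-1}\Lambda\cap K^c}|V_gg(\mu)|^2$ need not tend to $0$ as $K\nearrow G$, so your argument genuinely uses the amalgam hypothesis rather than merely being streamlined by it. Since every application in the paper has $g\in\mathcal{B}^1_\pi\subseteq\mathcal{B}^2_\pi$, this is harmless here, but to prove the lemma in the generality in which it is stated you would need the reduction or the alternative (HAP/weak-limit or dynamical) arguments of the cited references, which you correctly point to but do not carry out. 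Second, a small technical repair: for the dominated-convergence step you need that each compact set $yQ$ is eventually contained in $U_n$, which an arbitrary increasing exhaustion by compact sets does not guarantee; choose the exhaustion with $U_n\subseteq\mathrm{int}(U_{n+1})$ (always possible in a $\sigma$-compact locally compact group), after which $\mathds{1}_{U_n^cQ}\to 0$ pointwise as claimed. With these caveats recorded, the proof is sound and is, in substance, the argument the paper is delegating to its references.
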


For a frame $\pi(\Lambda) g$ for $\Hpi$, the associated \emph{coefficient operator} $\coeff : \Hpi \to \ell^2 (\Lambda)$ is defined by
$
f \mapsto (\langle f, \pi(\lambda) g \rangle )_{\lambda \in \Lambda}.
$
Its adjoint $\recon := \coeff^*$ is the \emph{reconstruction operator}, given by $\recon c = \sum_{\lambda \in \Lambda} c_{\lambda} \pi(\lambda) g$ for $c \in \ell^2 (\Lambda)$. The \emph{Gramian operator} is the composition $\coeff \recon$ on $\ell^2 (\Lambda)$, which will be identified with the matrix $(\langle \pi(\lambda) g, \pi(\lambda') g \rangle)_{\lambda, \lambda' \in \Lambda}$.

\section{Frame measure and Beurling density} \label{sec:frame_measure}
Henceforth, let $(\pi, \mathcal{H}_{\pi})$ be a discrete series representation of $G$ of formal dimension $d_{\pi} > 0$.

\subsection{Frame measure}
In this section we define a notion of frame measure for a given coherent frame. This notion is a special case of the so-called \emph{ultrafilter frame measure function} for abstract frames as considered in \cite{balan2007measure}.

\begin{definition} \label{def:frame_measure}
Let $(K_n)_{n \in \mathbb{N}}$ be a strong F\o lner sequence in $G$ satisfying the cover property \eqref{eq:nested_folner}. Let $\pi(\Lambda) g$ be a coherent frame for $\Hpi$ with canonical dual frame $(h_{\lambda})_{\lambda \in \Lambda}$.

The \emph{lower} and \emph{upper frame measure} of $\pi(\Lambda) g$ are defined by
\begin{align*}
    M^- (\mathcal{G}_{\Lambda}) := \lim_{n \to \infty} \inf_{x \in G} \frac{1}{\#( \Lambda \cap x K_n)} \sum_{\lambda \in \Lambda \cap x K_n} \langle \pi(\lambda) g, h_{\lambda} \rangle
\end{align*}
and
\begin{align*}
    M^+ (\mathcal{G}_{\Lambda}) := \lim_{n \to \infty} \sup_{x \in G} \frac{1}{\#( \Lambda \cap x K_n)} \sum_{\lambda \in \Lambda \cap x K_n} \langle \pi(\lambda) g, h_{\lambda} \rangle,
\end{align*}
respectively.
\end{definition}

It will follow from Theorem \ref{thm:fundamental_identity} (cf. Corollary \ref{cor:frame_measure_independent}) that the frame measures of a coherent frame $\pi(\Lambda) g$ with $g \in \mathcal{B}_{\pi}^2$ are independent of the choice of strong F\o lner sequence.

\subsection{Beurling density}
For a discrete set $\Lambda \subseteq G$, its \emph{lower} and \emph{upper Beurling density} are defined by
\begin{align*} \label{eq:lower_density}
    D^- (\Lambda) := \lim_{n \to \infty} \inf_{x \in G} \frac{\#( \Lambda \cap x K_n)}{\mu_G (K_n)}
\quad \text{resp.} \quad
    D^+ (\Lambda) := \lim_{n \to \infty} \sup_{x \in G} \frac{\#( \Lambda \cap x K_n)}{\mu_G (K_n)} ,
\end{align*}
where $(K_n)_{n \in \mathbb{N}}$ is any strong F\o lner sequence. The definition of $D^-$ and $D^+$ are independent of the choice of F\o lner sequence, cf. \cite[Proposition 5.14]{pogorzelski2021leptin}.

The following theorem relates the notions of frame measure and Beurling density. In particular, it shows that the frame measures only depend on the density of the index set and the formal degree of the representation.

\begin{theorem} \label{thm:fundamental_identity}
Suppose $\pi(\Lambda) g$ is a frame for $\mathcal{H}_{\pi}$ with $g \in \mathcal{B}^2_{\pi}$. Then
\[ M^-(\mathcal{G}_{\Lambda}) = \frac{d_{\pi}}{D^+ (\Lambda)} \quad \text{and} \quad M^+ (\mathcal{G}_{\Lambda}) = \frac{d_{\pi}}{D^- (\Lambda)}.
\]
\end{theorem}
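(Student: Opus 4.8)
The plan is to prove the identity by establishing the two equalities $M^-(\mathcal{G}_\Lambda) = d_\pi / D^+(\Lambda)$ and $M^+(\mathcal{G}_\Lambda) = d_\pi/D^-(\Lambda)$ through a single ``averaging'' estimate that compares the Følner average of the diagonal quantity $\langle \pi(\lambda) g, h_\lambda\rangle$ with the Følner average of the counting function $\#(\Lambda \cap xK_n)$. The starting observation is the orthogonality relation: for the reconstruction operator $\recon = \coeff^*$ we have $\recon\coeff = \frameop = \mathrm{Id}$ on $\Hpi$ after passing to the dual, equivalently $\langle \pi(\lambda) g, h_\lambda\rangle = \langle \pi(\lambda) g, \frameop^{-1}\pi(\lambda) g\rangle$, and these numbers are exactly the diagonal entries of the (bounded, self-adjoint, idempotent) operator $P := \coeff \frameop^{-1} \recon$ on $\ell^2(\Lambda)$, which is the orthogonal projection onto the range of $\coeff$. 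The trace-like heuristic is that $P$ has ``rank density'' equal to $d_\pi / \mu_G(\text{unit volume})$ worth of dimensions per unit volume, while the index set $\Lambda$ supplies $D^\pm(\Lambda)$ points per unit volume; the ratio of these is the average diagonal value. Making this rigorous is the content of the theorem.

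Concretely, I would proceed as follows. First, fix a strong Følner sequence $(K_n)$ with the nesting property \eqref{eq:nested_folner}, and for $x \in G$ and $n \in \N$ write $\Lambda_{x,n} := \Lambda \cap xK_n$ and $N_{x,n} := \#\Lambda_{x,n}$. The key quantity is $\sum_{\lambda \in \Lambda_{x,n}} \langle \pi(\lambda)g, h_\lambda\rangle = \mathrm{tr}(R_{x,n} P)$ where $R_{x,n}$ is the coordinate restriction (diagonal projection) onto $\ell^2(\Lambda_{x,n})$. I would then show that this trace is asymptotically $d_\pi \mu_G(K_n)$ up to an error that is $o(\mu_G(K_n))$ uniformly in $x$. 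The upper and lower bounds on $\mathrm{tr}(R_{x,n} P)$ are obtained by sandwiching: on one hand, use the frame reproducing formula $f = \sum_\lambda \langle f, \pi(\lambda)g\rangle h_\lambda$ together with the orthogonality relations \eqref{eq:ortho} to write, for a suitable ``test vector'' built from $\pi(y)g$ with $y$ ranging over $xK_n$, an integral identity whose discretization over $\Lambda_{x,n}$ produces the trace; on the other hand, control the boundary contributions — the terms indexed by $\lambda$ near $\partial(xK_n)$ and the off-diagonal decay of the Gramian — using the Wiener-amalgam integrability $g \in \mathcal{B}^2_\pi$, i.e. $V_g g \in W^L(L^2)$, via the restriction estimate of Lemma \ref{lem:restriction} and the convolution bounds \eqref{eq:maximal_convolution}. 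The point is that the Gramian $(\langle \pi(\lambda)g,\pi(\lambda')g\rangle)$ and the cross-Gramian with the dual frame have $W^L(L^2)$-type (indeed, after Lemma \ref{lem:bettervectors}, $W(L^1)$-type) off-diagonal decay, so replacing $xK_n$ by $xK_nQ$ or by an inner core changes the trace by only $o(\mu_G(K_n))$ uniformly in $x$, and the Følner condition kills these boundary layers in the limit.

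Having shown $\mathrm{tr}(R_{x,n}P) = d_\pi\,\mu_G(K_n) + o(\mu_G(K_n))$ uniformly in $x$, I divide by $N_{x,n}$ to get
\[
\frac{1}{N_{x,n}} \sum_{\lambda \in \Lambda_{x,n}} \langle \pi(\lambda)g, h_\lambda\rangle = \frac{d_\pi \mu_G(K_n) + o(\mu_G(K_n))}{N_{x,n}} = \frac{d_\pi + o(1)}{N_{x,n}/\mu_G(K_n)},
\]
where the $o(1)$ is uniform in $x$. Taking $\inf_x$ then $n\to\infty$ on the left gives $M^-(\mathcal{G}_\Lambda)$, while $\inf_x$ of the right side is governed by $\sup_x N_{x,n}/\mu_G(K_n) \to D^+(\Lambda)$ — here I must be a little careful that $\Lambda$ is relatively dense so $N_{x,n} \geq 1$ and in fact grows like $\mu_G(K_n)$ uniformly, and relatively separated so $N_{x,n}/\mu_G(K_n)$ stays bounded, which legitimizes dividing the limit of the numerator by the limit of the denominator. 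This yields $M^-(\mathcal{G}_\Lambda) = d_\pi/D^+(\Lambda)$; the $\sup_x$ version gives $M^+(\mathcal{G}_\Lambda) = d_\pi/D^-(\Lambda)$ identically.

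The main obstacle is the uniform-in-$x$ estimate $\mathrm{tr}(R_{x,n}P) = d_\pi\mu_G(K_n) + o(\mu_G(K_n))$, and within it the treatment of the boundary/off-diagonal terms. In the Abelian Gabor setting one typically invokes spectral invariance of the matrix algebra (Jaffard/Baskakov–Gröchenig–Leinert) to know that $\frameop^{-1}$ transported to the Gramian picture still has nice off-diagonal decay; for groups of exponential growth this is unavailable. I would instead argue directly: express $h_\lambda = \frameop^{-1}\pi(\lambda)g$ and estimate $|\langle \pi(\lambda)g, h_\lambda\rangle - d_\pi^{-1}\langle \pi(\lambda)g,\pi(\lambda)g\rangle_{\text{reproduced}}|$ only through the frame inequalities and $W(L^1)$-decay of $V_g g$ (Lemma \ref{lem:bettervectors}), never through invertibility of a Banach algebra element — this is presumably the ``alternative argument'' the introduction flags as the main technical contribution. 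A secondary subtlety is making the ``trace'' bookkeeping rigorous: $R_{x,n}P R_{x,n}$ is a genuine trace-class (finite-rank) operator on $\ell^2(\Lambda_{x,n})$ since $\Lambda_{x,n}$ is finite, so $\mathrm{tr}(R_{x,n}P) = \sum_{\lambda\in\Lambda_{x,n}} P_{\lambda\lambda}$ is unproblematic, but relating $\sum_{\lambda\in\Lambda_{x,n}}P_{\lambda\lambda}$ to $d_\pi\mu_G(xK_n)=d_\pi\mu_G(K_n)$ requires the continuous orthogonality relation \eqref{eq:ortho} applied carefully to interchange a sum over $\Lambda$ with the integral over $G$, again with Følner-type error control.
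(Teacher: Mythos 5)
Your proposal follows essentially the same route as the paper: the heart of both arguments is the uniform-in-$x$ estimate $\sum_{\lambda \in \Lambda \cap xK_n} \langle \pi(\lambda)g, h_{\lambda}\rangle = d_{\pi}\,\mu_G(K_n) + o(\mu_G(K_n))$, obtained by integrating the reproducing identity $d_{\pi} = \sum_{\lambda} V_g g_{\lambda}(y)\overline{V_g h_{\lambda}(y)}$ over $xK_n$, splitting $\Lambda$ into an inner core, a far exterior, and a F\o lner boundary layer, and controlling the latter two via Lemma \ref{lem:restriction} and the strong F\o lner property while touching the dual frame only through the frame bounds; your trace-of-projection packaging of the same sum is cosmetic. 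One small caveat: the theorem assumes only $g \in \mathcal{B}^2_{\pi}$, so your parenthetical appeal to Lemma \ref{lem:bettervectors} for $W(L^1)$-type decay is not available here, but it is also not needed --- the $W^L(L^2)$ restriction estimate suffices, exactly as in the paper.
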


\begin{proof}
Without loss of generality, it will be assumed throughout the proof that $\| g \|_{\Hpi} = d_{\pi}^{1/2}$, so that $V_g : \Hpi \to L^2 (G)$ is an isometry. Write $g_{\lambda} = \pi(\lambda) g$ for $\lambda \in \Lambda$. Suppose $(g_{\lambda})_{\lambda \in \Lambda}$ is frame for $\Hpi$ with frame bounds $A, B >0$. Then the index set $\Lambda$ is relatively separated and relatively dense.  Therefore, there exists $n_0 \in \mathbb{N}$ such that
$
1 \leq \# (\Lambda \cap x K_{n_0} ) < \infty$ for all $x \in G$.

Let $\varepsilon > 0$ and $x \in G$ be arbitrary and let $n \in \mathbb{N}$ be such that $n \geq n_0$. In addition, fix a symmetric compact unit neighorhood $K \subseteq G$ such that
\begin{align} \label{eq:choice_K}
 \int_{G/K} |V_ g g (y) |^2 \; d\mu_G (y) \leq \varepsilon^2
 \quad \text{and} \quad
 \sum_{\lambda \in \Lambda \cap K^c} | V_g g (\lambda) |^2 \leq \varepsilon^2,
\end{align}
cf. Lemma \ref{lem:restriction} and Equation \eqref{eq:ortho}.
For fixed $y \in G$, it follows that
\begin{align*}
    d_{\pi}
    &= \| \pi(y) g \|^2_{\Hpi}
    = \bigg \langle \sum_{\lambda \in \Lambda} \langle \pi(y) g , h_{\lambda} \rangle g_{\lambda} , \pi(y) g \bigg \rangle
   = \sum_{\lambda \in \Lambda} V_{g} g_{\lambda} (y) \overline{V_{g} h_{\lambda} (y)}.
\end{align*}
Define $H(y) := \sum_{\lambda \in \Lambda} V_{g} g_{\lambda} (y) \overline{V_{g} h_{\lambda} (y)}$ for $y \in G$, and write
$H(y) = \sum_{i = 1}^3 H_i (y)$, where
\begin{align*}
    H_1 (y) = & \sum_{\lambda \in \Lambda \cap x(K_n \setminus K_n^c K)} V_{g} g_{\lambda} (y) \overline{V_{g} h_{\lambda} (y)}, \quad H_2 (y) = \sum_{\lambda \in \Lambda \cap x(K_n K)^c} V_{g} g_{\lambda} (y) \overline{V_{g} h_{\lambda} (y)},
\end{align*}
and
\[
 H_3 (y) = \sum_{\lambda \in \Lambda \cap (x K_n K \setminus x ( K_n \setminus  K^c_n K))}  V_{g} g_{\lambda} (y) \overline{V_{g} h_{\lambda} (y)}
= \sum_{\lambda \in \Lambda \cap x K_n K \cap x K^c_n K}  V_{g} g_{\lambda} (y) \overline{V_{g} h_{\lambda} (y)}.
\]

The proof is split into four steps.
\\\\
\textbf{Step 1.} This step provides estimates of $T_i := \int_{x K_n} H_i (y) \; d\mu_G (y)$ for $i = 1,2,3$. Similar estimates for metric balls in settings with polynomial growth can be found in \cite{fuehr2017density, mitkovski2020density, nitzan2020revisiting}.
\\\\
\emph{Estimate $T_1$.} Note that a direct calculation entails
\[
T_1 = \int_{G} H_1 (y) \; d\mu_G (y) - \int_{G \setminus x K_n} H_1 (y) \; d\mu_G (y) = \sum_{\lambda \in \Lambda \cap x (K_n \setminus K_n^c K)} \big \langle V_{g} g_{\lambda}, V_{g} h_{\lambda} \rangle_{L^2} - L,
\]
where $L := \int_{G \setminus x K_n}  H_1 (y) \; d\mu_G (y) = \sum_{\lambda \in \Lambda \cap x (K_n \setminus K_n^c K)} \int_{G \setminus x K_n} V_{g} g_{\lambda} (y) \overline{V_{g} h_{\lambda} (y)} \; d\mu_G (y)$.
For estimating $L$, note first that an application of Cauchy-Schwarz' inequality gives
\[
\bigg| \int_{G \setminus xK_n} V_{g} g_{\lambda} (y) \overline{V_{g} h_{\lambda} (y)} d\mu_G (y) \bigg| \leq \bigg( \int_{G \setminus xK_n} |V_{g} g_{\lambda} (y) |^2 \; d\mu_G (y) \bigg)^{1/2} \| h_{\lambda} \|_{\Hpi},
\]
where it is used that $\| V_{g} h_{\lambda} \|_{L^2} = \| h_{\lambda} \|_{\Hpi}$. Since $\lambda \in x   (K_n \setminus K_n^c K)$, it follows that $\lambda K \subseteq x K_n$. Hence, a change-of-variable gives
\begin{align*}
 \int_{G \setminus x K_n} |V_{g} g (\lambda^{-1} y) |^2 \; d\mu_G (y)
 \leq \int_{G \setminus \lambda K} |V_{g} g (\lambda^{-1} y) |^2 \; d\mu_G (y)
 = \int_{G \setminus K} |V_{g} g (y) |^2 \; d\mu_G (y).
\end{align*}
Therefore, Equation \eqref{eq:choice_K} yields
\[
\bigg( \int_{G / x K_n} |V_{g} g_{\lambda} (y) |^2 \; d\mu_G (y) \bigg)^{1/2} \leq \varepsilon.
\]
Hence,
\[
|L| \leq \varepsilon \sum_{\lambda \in \Lambda \cap x (K_n \setminus K_n^c K) } \| h_{\lambda} \|_{\Hpi} \leq \varepsilon A^{-1/2} \# (\Lambda \cap x K_n),
\]
where it used that $\| h_{\lambda} \|_{\Hpi} \leq A^{-1/2}$ for all $\lambda \in \Lambda$.
\\\\
\emph{Estimate $T_2$.} An application of Cauchy-Schwarz' inequality gives
\begin{align*}
    \bigg| \int_{x K_n} H_2 (y) \; d\mu_G (y) \bigg| \leq \int_{x K_n} \bigg( \sum_{\lambda \in \Lambda \cap x (K_n K)^c} |V_{g} g_{\lambda} (y)|^2 \bigg)^{\frac{1}{2}} \bigg( \sum_{\lambda \in \Lambda} |V_{g} h_{\lambda} (y) |^2 \bigg)^{\frac{1}{2}} \; d\mu_G (y).
\end{align*}
For $y \in x K_n$ and $\lambda \in x (K_n K)^c$, one has $\lambda \notin x K_n K$ and hence $\lambda \notin y K$. Therefore,
\[
\bigg( \sum_{\lambda \in \Lambda \cap x (K_n K)^c} |V_{g} g_{\lambda} (y)|^2 \bigg)^{\frac{1}{2}}
\leq \bigg( \sum_{\lambda \in \Lambda \cap  yK^c} |V_{g} g_{\lambda} (y)|^2 \bigg)^{\frac{1}{2}} = \bigg( \sum_{\lambda \in \Lambda \cap y K^c} |V_g g (y^{-1} \lambda)|^2 \bigg)^{\frac{1}{2}}.
\]
By Equation \eqref{eq:choice_K}, it holds that
\[
\bigg( \sum_{\lambda \in \Lambda \cap K^c} |V_g g ( \lambda)|^2 \bigg)^{\frac{1}{2}} \leq \varepsilon,
\]
and hence
\begin{align*}
\bigg| \int_{x K_n} H_2 (y) \; d\mu_G (y) \bigg| &\leq \varepsilon  \int_{x K_n} \bigg( \sum_{\lambda \in \Lambda} |\langle \pi(y) g, h_{\lambda} \rangle |^2 \bigg)^{\frac{1}{2}} \; d\mu_G (y) \leq \varepsilon A^{-1/2} \| g \|_{\Hpi} \mu_G (x K_n) \\
&= \varepsilon A^{-1/2} d_{\pi}^{1/2} \mu_G ( K_n)
\end{align*}
by the frame property of $(h_{\lambda})_{\lambda \in \Lambda}$.
\\\\
\emph{Estimate $T_3$.} A direct calculation gives
\begin{align*}
\int_{x K_n} |H_3 (y)| \; d\mu_G (y)
&\leq \sum_{\lambda \in \Lambda \cap x K_n K \cap x K_n^c K} \int_G |V_{g} g_{\lambda} (y)| |V_{g} h_{\lambda} (y)| \; d\mu_G (y) \\
&\leq \sum_{\lambda \in \Lambda \cap x K_n K \cap x K_n^c K} \| V_{g} g_{\lambda} \|_{L^2} \| V_{g} h_{\lambda} \|_{L^2} \\
&\leq A^{-1/2}  B^{1/2} \# \big(\Lambda \cap  (xK_n K \cap  xK_n^c K)\big),
\end{align*}
where it is used that $\| g_{\lambda} \|_{\Hpi} \leq B^{1/2}$ and $ \| h_{\lambda} \|_{\Hpi} \leq A^{-1/2}$ for all $\lambda \in \Lambda$.
\\\\
\textbf{Step 2.} Using the notation of Step 1, we have
\[
\sum_{\lambda \in \Lambda \cap x (K_n \setminus K_n^c K)} \langle g_{\lambda}, h_{\lambda} \rangle = \int_{x K_n} H(y) \; d\mu_G (y) - T_2 - T_3 + L.
\]
This implies that
\begin{align*}
 & \bigg | \int_{x K_n} H(y) \; d\mu_G (y) - \sum_{\lambda \in \Lambda \cap x K_n} \langle g_{\lambda}, h_{\lambda} \rangle \bigg |   \\
 &  \quad \quad \quad =    \bigg | \int_{x K_n} H(y) \; d\mu_G (y) - \sum_{\lambda \in \Lambda \cap x (K_n \setminus K_n^c K)} \langle g_{\lambda}, h_{\lambda} \rangle - \sum_{\lambda \in \Lambda \cap x K_n \setminus  (x (K_n \setminus K_n^c K)) } \langle g_{\lambda}, h_{\lambda} \rangle \bigg |         \\
    & \quad \quad \leq  |T_2| + |T_3| + |L| + \sum_{\lambda \in \Lambda \cap xK_n \cap xK_n^c K} | \langle g_{\lambda}, h_{\lambda} \rangle | \\
    &\quad \quad \leq \varepsilon A^{-1/2} d_{\pi}^{1/2} \mu_G (K_n) + A^{-1/2} B^{1/2} \# \big(\Lambda \cap  (xK_n K \cap  xK_n^c K)\big) \\
    & \quad \quad \quad \quad  + \varepsilon A^{-1/2} \# (\Lambda \cap x K_n) + \# (\Lambda \cap (xK_n \cap x K_n^c K)), \numberthis \label{eq:cardinality}
\end{align*}
where the last step used the estimates of $T_2, T_3$ and $L$ (cf. Step 2) together with $|\langle g_{\lambda}, h_{\lambda} \rangle | \leq 1$.

To further estimate the difference \eqref{eq:cardinality}, we use a suitable upper bound for the cardinality $\# (\Lambda \cap (xK_n K \cap x K_n^c K))$. By a standard packing argument, there exists $C(K, \Lambda) > 0$ such that, for all compact sets $U \subseteq G$,
\[
\# (\Lambda \cap U) \leq C \mu_G (UK),
\]
see, e.g., \cite[Lemma 2.4]{pogorzelski2021leptin} or \cite[Corollary 3.4]{enstad2022dynamical}. Applying this to the sets $U = x K_n K \cap x K_n^c K$ yields that
\begin{align*}
\# (\Lambda \cap (x K_n K \cap x K_n^c K)) &\leq C \mu_G ((x K_n K \cap x_n K_n^c K)K) =
C \mu_G  (xK_n K^2 \cap x K_n^c K^2 ) \\
&= C \mu_G  (K_n K^2 \cap  K_n^c K^2 ). \numberthis \label{eq:cardinality_measure}
\end{align*}
Setting $C' := (1+ A^{-1/2} B^{1/2}) C$, it follows therefore from combining \eqref{eq:cardinality} and \eqref{eq:cardinality_measure} that
\begin{align*}
    & \bigg |\int_{x K_n} H(y) \; d\mu_G (y) -  \sum_{\lambda \in \Lambda \cap x K_n} \langle g_{\lambda}, h_{\lambda} \rangle \bigg |  \\
    &\quad \quad \leq \varepsilon A^{-1/2} d_{\pi}^{1/2} \mu_G (K_n) + \varepsilon A^{-1/2} \# (\Lambda \cap x K_n) + C' \mu_G  (K_n K^2 \cap  K_n^c K^2 ).
    \end{align*}
    with all constants independent of $x$ and $n$.
    \\\\
    \textbf{Step 3.}
    Recall that $\mu_G (K_n)^{-1} \int_{x K_n} H(y) \; d\mu_G (y) = d_{\pi}$.
    Therefore, the estimates obtained in Step 2 imply that
\begin{align*}
& \bigg| d_{\pi}  - \frac{1}{\mu_G(K_n)} \sum_{\lambda \in \Lambda \cap x K_n} \langle g_{\lambda}, h_{\lambda} \rangle \bigg|  \\
&\quad \quad \leq \varepsilon A^{-1/2} d_{\pi}^{1/2} + \varepsilon A^{-1/2} \frac{\# (\Lambda \cap x K_n)}{\mu_G (K_n)} + C' \frac{\mu_G (K_n K^2 \cap  K_n^c K^2 )}{\mu_G (K_n)}.
\end{align*}
Multiplying both sides with $\mu_G(K_n) / \# (\Lambda \cap xK_n)$
yields
\begin{align*}
    & \bigg| d_{\pi} \bigg( \frac{\# (\Lambda \cap x K_n)}{\mu_G(K_n)} \bigg)^{-1} -
    \frac{1}{\# (\Lambda \cap x K_n)} \sum_{\lambda \in \Lambda \cap x K_n} \langle g_{\lambda}, h_{\lambda} \rangle \bigg| \\
    &\quad \quad \leq \varepsilon A^{-1/2}  + \varepsilon A^{-1/2} d_{\pi}^{1/2} \bigg( \frac{\# (\Lambda \cap x K_n)}{\mu_G(K_n)} \bigg)^{-1} \\
    &\quad \quad + C'   \frac{\mu_G (K_n K^2 \cap  K_n^c K^2 )}{\mu_G (K_n)} \bigg( \frac{\# (\Lambda \cap x K_n)}{\mu_G(K_n)} \bigg)^{-1} . \numberthis \label{eq:final_estimate}
\end{align*}
By the strong F\o lner property \eqref{eq:strong_folner}, it follows that
\[
\lim_{n \to \infty}  \frac{\mu_G (K_n K^2 \cap  K_n^c K^2 )}{\mu_G (K_n)} = 0.
\]
 Therefore,
\begin{equation}\label{Eqn=GoesToZero}
\lim_{n \rightarrow \infty} \sup_{x \in G} \bigg| d_{\pi} \bigg( \frac{\# (\Lambda \cap x K_n)}{\mu_G(K_n)} \bigg)^{-1} -
    \frac{1}{\# (\Lambda \cap x K_n)} \sum_{\lambda \in \Lambda \cap x K_n} \langle g_{\lambda}, h_{\lambda} \rangle \bigg| = 0,
\end{equation}
where it is used that $D^- (\Lambda) > 0$ since $\Lambda$ is relatively dense, see, e.g., \cite[Lemma 3.8]{pogorzelski2021leptin}.
\\~\\
\textbf{Step 4.} Using \eqref{Eqn=GoesToZero}, the conclusion $\frac{d_{\pi}}{D^{-} (\Lambda)} = M^+(\mathcal{G}_{\Lambda})$ can be shown as follows. For $i \in \mathbb{N}$, choose $n_i \in \mathbb{N}$ increasing and $x_i \in G$  such that
\[
 M^+ (\mathcal{G}_{\Lambda}) =   \lim_{i \rightarrow \infty}  \frac{1}{\# (\Lambda \cap x_i K_{n_i})} \sum_{\lambda \in \Lambda \cap x_i K_{n_i}} \langle g_{\lambda}, h_{\lambda} \rangle.
 \]
Then, by \eqref{Eqn=GoesToZero} and definition of the lower Beurling density,
\[
 M^+ (\mathcal{G}_{\Lambda})
 =    \lim_{i \rightarrow \infty} d_{\pi} \bigg( \frac{\# (\Lambda \cap x_i K_{n_i})}{\mu_G(K_{n_i})} \bigg)^{-1}
 \leq   \frac{d_{\pi}}{D^{-} (\Lambda) }.
\]
Conversely, for $i \in \mathbb{N}$  choose $n_i \in \mathbb{N}$ increasing and $x_i \in G$  such that
\[
D^{-} (\Lambda) = \lim_{i \to \infty} \frac{\# (\Lambda \cap x_i K_{n_i})}{\mu_G(K_{n_i})}.
\]
Then, by \eqref{Eqn=GoesToZero} and definition of $M^+ (\mathcal{G}_{\Lambda})$,
\[
 \frac{d_{\pi}}{D^{-} (\Lambda) } =  \lim_{i \rightarrow \infty}  \frac{1}{\# (\Lambda \cap x_i K_{n_i})} \sum_{\lambda \in \Lambda \cap x_i K_{n_i}} \langle g_{\lambda}, h_{\lambda} \rangle
 \leq  M^+ (\mathcal{G}_{\Lambda}).
\]
The identity $\frac{d_{\pi}}{D^{+} (\Lambda)} = M^- (\mathcal{G}_{\Lambda})$ is shown similarly.
\end{proof}

Theorem \ref{thm:fundamental_identity} provides an extension of \cite[Theorem 3]{balan2006density2} for Gabor frames in $L^2 (\mathbb{R}^d)$ to general coherent frames. The partition technique used in the proof resembles the proof method of \cite[Theorem 5]{balan2006density} (see also \cite{kolountzakis1996structure}), but the above proof crucially avoids the use of a reference system forming a Riesz basis, which is unknown to exist in the setting of the present paper. Instead, the proof compares the given coherent frame to a continuous reproducing formula \eqref{eq:ortho}, much like the density conditions \cite{nitzan2020revisiting, fuehr2017density, mitkovski2020density} for groups with polynomial growth.

\begin{corollary} \label{cor:frame_measure_independent}
 The lower and upper frame measures $M^- (\mathcal{G}_{\Lambda})$ and $M^+ (\mathcal{G}_{\Lambda})$ of a coherent frame $\pi(\Lambda) g$ wth $g \in \mathcal{B}_{\pi}^2$ are independent of the choice of strong F\o lner sequence $(K_n)_{n \in \mathbb{N}}$.
\end{corollary}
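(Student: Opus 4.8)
The plan is to deduce the corollary directly from Theorem \ref{thm:fundamental_identity}. The subtlety addressed by the statement is that the averages $\frac{1}{\#(\Lambda \cap xK_n)}\sum_{\lambda \in \Lambda\cap xK_n}\langle \pi(\lambda)g,h_\lambda\rangle$ entering Definition \ref{def:frame_measure}, and hence a priori their limits $M^\pm(\mathcal{G}_\Lambda)$, are formed with respect to a fixed strong F\o lner sequence $(K_n)_{n\in\mathbb{N}}$ satisfying the cover property \eqref{eq:nested_folner}. First I would apply Theorem \ref{thm:fundamental_identity}, which is available since $g\in\mathcal{B}_{\pi}^2$, to obtain the identities $M^-(\mathcal{G}_\Lambda)=d_\pi/D^+(\Lambda)$ and $M^+(\mathcal{G}_\Lambda)=d_\pi/D^-(\Lambda)$; here the implicit strong F\o lner sequence on both sides of each identity is the one fixed in Definition \ref{def:frame_measure}.

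Next I would invoke the fact, already recorded in Section \ref{sec:frame_measure} (cf. \cite[Proposition 5.14]{pogorzelski2021leptin}), that the lower and upper Beurling densities $D^-(\Lambda)$ and $D^+(\Lambda)$ of a relatively separated set are independent of the choice of strong F\o lner sequence. Since the formal degree $d_\pi>0$ is a fixed invariant of the representation, the quantities $d_\pi/D^+(\Lambda)$ and $d_\pi/D^-(\Lambda)$ are therefore independent of the strong F\o lner sequence, and consequently so are $M^-(\mathcal{G}_\Lambda)$ and $M^+(\mathcal{G}_\Lambda)$. I would also note that $D^\pm(\Lambda)\in(0,\infty)$, since the index set of a coherent frame is relatively separated and relatively dense, so that these quotients are meaningful.

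I do not expect a genuine obstacle here: all of the analytic content is carried by Theorem \ref{thm:fundamental_identity}, whose proof simultaneously establishes the existence of the defining limits and pins them to the F\o lner-independent quantities $d_\pi/D^\pm(\Lambda)$. The only fine point worth a sentence is that Definition \ref{def:frame_measure} restricts attention to strong F\o lner sequences with the cover property \eqref{eq:nested_folner}; since such sequences always exist and the Beurling densities are in fact independent of all strong F\o lner sequences, the corollary as stated follows without loss of generality.
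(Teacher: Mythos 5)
Your proposal is correct and matches the paper's own argument: both deduce the corollary by combining the identities $M^\pm(\mathcal{G}_\Lambda)=d_\pi/D^\mp(\Lambda)$ from Theorem \ref{thm:fundamental_identity} with the F\o lner-independence of the Beurling densities from \cite[Proposition 5.14]{pogorzelski2021leptin}. Your extra remarks on the positivity and finiteness of $D^\pm(\Lambda)$ and on the cover property \eqref{eq:nested_folner} are sensible but not needed beyond what the paper already records.
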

\begin{proof}
 By Theorem \ref{thm:fundamental_identity}, it follows that $M^- (\mathcal{G}_{\Lambda}) = d_{\pi} / D^+ (\Lambda)$. Since $D^+ (\Lambda)$ is independent of the choice of a strong F\o lner sequence by \cite[Proposition 5.14]{pogorzelski2021leptin}, the claim for $M^- (\mathcal{G}_{\Lambda})$ follows. The same argument shows the claim for $M^+ (\mathcal{G}_{\Lambda})$.
\end{proof}

\subsection{Density conditions}
Two immediate consequences of Theorem \ref{thm:fundamental_identity} are the following:

\begin{corollary} \label{cor:density_frames}
Let $g \in \mathcal{B}^2_{\pi}$. If $\pi(\Lambda) g$ is a frame for $\Hpi$, then $D^- (\Lambda) \geq d_{\pi}$. If $\pi(\Lambda) g$ is a Riesz basis for $\Hpi$, then $D^+ (\Lambda) = d_{\pi}$.
\end{corollary}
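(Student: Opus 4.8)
The plan is to read both statements directly off Theorem~\ref{thm:fundamental_identity}, using only the elementary bound $0 < \langle \pi(\lambda) g, h_{\lambda} \rangle \leq 1$ recorded in Section~\ref{sec:notation} for a coherent frame and its canonical dual. Since $g \in \mathcal{B}^2_{\pi}$, Theorem~\ref{thm:fundamental_identity} applies and gives the identities $M^+(\mathcal{G}_{\Lambda}) = d_{\pi}/D^-(\Lambda)$ and $M^-(\mathcal{G}_{\Lambda}) = d_{\pi}/D^+(\Lambda)$; note also that, as a frame, $\pi(\Lambda) g$ has a relatively separated and relatively dense index set, so $D^{\pm}(\Lambda) \in (0,\infty)$ and all the quantities involved are finite and positive.

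For the first assertion, I would observe that each summand $\langle \pi(\lambda) g, h_{\lambda} \rangle$ lies in $(0,1]$, so the averages defining $M^{\pm}(\mathcal{G}_{\Lambda})$ — and hence their limits — satisfy $0 < M^-(\mathcal{G}_{\Lambda}) \leq M^+(\mathcal{G}_{\Lambda}) \leq 1$. Combining $M^+(\mathcal{G}_{\Lambda}) \leq 1$ with $M^+(\mathcal{G}_{\Lambda}) = d_{\pi}/D^-(\Lambda)$ yields $D^-(\Lambda) \geq d_{\pi}$.

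For the Riesz basis case, I would invoke the standard characterization that an exact frame (equivalently, a Riesz basis) is biorthogonal to its canonical dual frame, so that $\langle \pi(\lambda) g, h_{\lambda} \rangle = 1$ for every $\lambda \in \Lambda$. This forces $M^-(\mathcal{G}_{\Lambda}) = M^+(\mathcal{G}_{\Lambda}) = 1$, and then $M^-(\mathcal{G}_{\Lambda}) = d_{\pi}/D^+(\Lambda) = 1$ gives $D^+(\Lambda) = d_{\pi}$ (and in fact $D^-(\Lambda) = D^+(\Lambda) = d_{\pi}$).

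Essentially all the work has already been done in Theorem~\ref{thm:fundamental_identity}; the only step that needs a sentence of justification is the passage from ``exact frame'' to the pointwise identity $\langle \pi(\lambda) g, h_{\lambda} \rangle = 1$, which I expect to be the only mildly nontrivial point and which I would simply cite as a well-known fact from abstract frame theory.
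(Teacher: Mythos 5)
Your proposal is correct and follows essentially the same route as the paper: both deduce $D^-(\Lambda) \geq d_{\pi}$ from $M^+(\mathcal{G}_{\Lambda}) \leq 1$ via the bound $\langle \pi(\lambda) g, h_{\lambda}\rangle \leq 1$ together with Theorem~\ref{thm:fundamental_identity}, and both use biorthogonality of a Riesz basis with its canonical dual to get $M^-(\mathcal{G}_{\Lambda}) = 1 = d_{\pi}/D^+(\Lambda)$. No gaps; the biorthogonality step is indeed the only point requiring citation, exactly as the paper treats it.
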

\begin{proof}
If $\pi(\Lambda) g$ is a frame for $\Hpi$ with canonical dual frame $(h_{\lambda})_{\lambda \in \Lambda}$, then
$
0 \leq \langle \pi(\lambda) g, h_{\lambda} \rangle \leq 1$ for all $\lambda \in \Lambda$, so that Theorem \ref{thm:fundamental_identity} yields
$1 \geq M^+ (\mathcal{G}_{\Lambda}) = d_{\pi} / D^- (\Lambda)$. If $\pi(\Lambda) g$ is a Riesz basis, then $(h_{\lambda})_{\lambda \in \Lambda}$ is bi-orthogonal to $\pi (\Lambda) g$, so that $\langle \pi(\lambda) g, h_{\lambda} \rangle = 1$ for all $\lambda \in \Lambda$, and thus $1=M^- (\mathcal{G}_{\Lambda}) = d_{\pi} / D^+ (\Lambda)$ by Theorem \ref{thm:fundamental_identity}.
\end{proof}

Corollary \ref{cor:density_frames} recovers the statement on frames in \cite[Theorem 1.3]{enstad2022dynamical} and \cite[Theorem 3.14]{enstad2022coherent} under a seemingly weaker condition on the generating vector $g \in \Hpi$. Instead of the assumption $V_g g \in W^L (L^2)$, it is assumed in \cite{enstad2022coherent, enstad2022dynamical} that $V_g (\Hpi) \subseteq W^L (L^2)$.

\begin{corollary}
Suppose $\pi(\Lambda) g$ is a frame for $\Hpi$ with $g \in \mathcal{B}^2_{\pi}$ and frame bounds $A, B > 0$. Then
\begin{align} \label{eq:density_framebounds}
    A \leq d_{\pi}^{-1} D^{-} (\Lambda) \| g \|_{\Hpi}^2 \leq d_{\pi}^{-1} D^{+} (\Lambda) \| g \|_{\Hpi}^2 \leq B.
\end{align}
In particular, if $A=B$, then $D^-(\Lambda) = D^+ (\Lambda)$.
\end{corollary}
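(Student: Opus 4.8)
The plan is to derive \eqref{eq:density_framebounds} directly from Theorem \ref{thm:fundamental_identity}, using only elementary operator-theoretic bounds on the quantities $\langle \pi(\lambda) g, h_\lambda\rangle$ that enter the definition of the frame measures. The point is that these quantities can be bounded above and below uniformly in $\lambda \in \Lambda$ by constants depending only on the frame bounds $A,B$ and on $\|g\|_{\Hpi}$, which is exactly what is needed to convert the identities $M^{\pm}(\mathcal{G}_\Lambda) = d_\pi/D^{\mp}(\Lambda)$ into a two-sided inequality for the densities.

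First I would record the operator inequality underlying everything. Since $\pi(\Lambda) g$ is a frame with bounds $A \le B$, the frame operator obeys $A\,I \le \frameop \le B\,I$ on $\Hpi$, hence $B^{-1} I \le \frameop^{-1} \le A^{-1} I$. As $h_\lambda = \frameop^{-1}\pi(\lambda) g$ and $\pi$ is unitary, so that $\|\pi(\lambda) g\|_{\Hpi} = \|g\|_{\Hpi}$, this yields
\[
 B^{-1}\|g\|_{\Hpi}^2 \;\le\; \langle \pi(\lambda) g, h_\lambda\rangle \;=\; \langle \pi(\lambda) g, \frameop^{-1}\pi(\lambda) g\rangle \;\le\; A^{-1}\|g\|_{\Hpi}^2
\]
for every $\lambda \in \Lambda$, with both bounds independent of $\lambda$.

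Next I would feed this into the definition of the frame measures. Averaging the displayed inequality over the finite set $\Lambda \cap xK_n$, then taking $\inf_{x \in G}$ (resp. $\sup_{x \in G}$) and letting $n \to \infty$, gives
\[
 B^{-1}\|g\|_{\Hpi}^2 \;\le\; M^-(\mathcal{G}_\Lambda) \;\le\; M^+(\mathcal{G}_\Lambda) \;\le\; A^{-1}\|g\|_{\Hpi}^2,
\]
the middle inequality being immediate from $\inf_x \le \sup_x$. Substituting $M^-(\mathcal{G}_\Lambda) = d_\pi/D^+(\Lambda)$ and $M^+(\mathcal{G}_\Lambda) = d_\pi/D^-(\Lambda)$ from Theorem \ref{thm:fundamental_identity} and rearranging the two outer inequalities gives $A \le d_\pi^{-1} D^-(\Lambda)\|g\|_{\Hpi}^2$ and $d_\pi^{-1} D^+(\Lambda)\|g\|_{\Hpi}^2 \le B$; together with the trivial bound $D^-(\Lambda) \le D^+(\Lambda)$ this is precisely \eqref{eq:density_framebounds}. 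The final assertion is then immediate: if $A = B$, the chain \eqref{eq:density_framebounds} collapses, forcing $D^-(\Lambda)\|g\|_{\Hpi}^2 = D^+(\Lambda)\|g\|_{\Hpi}^2$ and hence $D^-(\Lambda) = D^+(\Lambda)$ since $\|g\|_{\Hpi} > 0$.

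I do not expect a real obstacle here: the substantive content is already packaged in Theorem \ref{thm:fundamental_identity}, and the remaining work is the routine operator-norm estimate above. The only points demanding a little attention are keeping the directions of the inequalities straight while passing between $\frameop$, $\frameop^{-1}$, the frame measures, and the densities, and noting that unitarity of $\pi$ makes $\|\pi(\lambda) g\|_{\Hpi}$ constant in $\lambda$, so the per-index bounds are genuinely uniform and survive the averaging and limiting.
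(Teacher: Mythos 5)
Your proof is correct and follows essentially the same route as the paper: bound $\langle \pi(\lambda) g, h_\lambda\rangle$ uniformly between $B^{-1}\|g\|_{\Hpi}^2$ and $A^{-1}\|g\|_{\Hpi}^2$ via $B^{-1}I \le \frameop^{-1} \le A^{-1}I$, then convert through the identities $M^{\pm}(\mathcal{G}_\Lambda) = d_\pi/D^{\mp}(\Lambda)$ of Theorem \ref{thm:fundamental_identity}. The only cosmetic difference is that you make the averaging step $B^{-1}\|g\|_{\Hpi}^2 \le M^-(\mathcal{G}_\Lambda) \le M^+(\mathcal{G}_\Lambda) \le A^{-1}\|g\|_{\Hpi}^2$ explicit, which the paper leaves implicit.
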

\begin{proof}
If $\pi(\Lambda) g$ is a frame for $\Hpi$ with canonical dual frame $(h_{\lambda})_{\lambda \in \Lambda}$, then
\[
\langle \pi(\lambda) g, h_{\lambda} \rangle = \langle \pi(\lambda) g, \frameop^{-1} \pi(\lambda) g \rangle \leq \frac{1}{A} \| \pi(\lambda) g \|^2_{\Hpi} = \frac{1}{A} \| g \|^2_{\Hpi}, \quad \lambda \in \Lambda.
\]
Hence, applying Theorem \ref{thm:fundamental_identity} yields $d_{\pi} / D^- (\Lambda) = M^+ (\mathcal{G}_{\Lambda}) \leq A^{-1} \| g \|_{\Hpi}^2$, and thus
\[
A \leq d_{\pi}^{-1} D^- (\Lambda) \| g \|_{\Hpi}^2.
\]
Using instead the lower bound $\langle \pi(\lambda) g, h_{\lambda} \rangle \geq B^{-1} \| g \|_{\Hpi}^2$, it follows by similar arguments that
$
d_{\pi}^{-1} D^+ (\Lambda) \| g \|_{\Hpi}^2 \leq B
$, as required.
\end{proof}

\section{Overcompleteness of coherent frames} \label{sec:overcompleteness}
This section concerns rigidity theorems for coherent frames showing that infinite sets can be removed yet leave a frame.

\subsection{Infinite excess}
The \emph{excess} of a coherent frame $\pi(\Lambda) g$ for $\Hpi$ is the supremum over the cardinalities of all subsets $\Gamma \subseteq \Lambda$ such that $(\pi(\lambda) g)_{\lambda \in \Lambda \setminus \Gamma}$ is complete in $\Hpi$.

Theorem \ref{thm:infinite_removal} shows that overcomplete coherent frames $\pi(\Lambda) g$ with $g \in \mathcal{B}^2_{\pi}$ have infinite excess. For this, the following characterization will be used, cf. \cite[Corollary 5.7]{balan2003deficits}.

\begin{theorem}[\cite{balan2003deficits}]
\label{thm:infinite_excess}
Let $(g_{\lambda})_{\lambda \in \Lambda}$ be a frame for a Hilbert space $\mathcal{H}$ with canonical dual frame $(h_{\lambda} )_{\lambda \in \Lambda}$. Then the following are equivalent:

\begin{enumerate}
    \item[(i)] There exists an infinite subset $\Gamma \subseteq \Lambda$ such that $(g_{\lambda})_{\lambda \in \Lambda \setminus \Gamma}$ is a frame for $\mathcal{H}$;
    \item[(ii)] There exists $\alpha \in (0,1)$ and an infinite subset $\Lambda_{\alpha} \subseteq \Lambda$ such that $\sup_{\lambda \in \Lambda_{\alpha}} \langle g_{\lambda}, h_{\lambda} \rangle \leq \alpha$.
\end{enumerate}
\end{theorem}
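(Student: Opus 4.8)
The plan is to reduce to the canonical Parseval frame and then estimate the operator norm of a truncated Gramian. Let $S$ be the frame operator of the full frame $(g_\lambda)_{\lambda \in \Lambda}$ and put $e_\lambda := S^{-1/2} g_\lambda$, so that $(e_\lambda)_{\lambda \in \Lambda}$ is a Parseval frame whose canonical dual is itself. Since $S^{\pm 1/2}$ is bounded and invertible, for every $\Gamma \subseteq \Lambda$ the family $(g_\lambda)_{\lambda \in \Lambda \setminus \Gamma}$ is a frame for $\mathcal{H}$ if and only if $(e_\lambda)_{\lambda \in \Lambda \setminus \Gamma}$ is, while
\[
\langle g_\lambda, h_\lambda \rangle = \langle g_\lambda, S^{-1} g_\lambda \rangle = \| S^{-1/2} g_\lambda \|^2 = \langle e_\lambda, e_\lambda \rangle ,
\]
so the quantity in (ii) is unchanged by the reduction. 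Hence it suffices to treat a Parseval frame, and I abbreviate $a_\lambda := \| e_\lambda \|^2 = \langle e_\lambda, e_\lambda \rangle$. The basic mechanism is that $(e_\lambda)_{\lambda \in \Gamma}$ is Bessel with bound at most $1$, its frame operator $T_\Gamma := \sum_{\lambda \in \Gamma} \langle \cdot , e_\lambda \rangle e_\lambda$ satisfies $0 \leq T_\Gamma \leq I$, the frame operator of the complementary family $(e_\lambda)_{\lambda \in \Lambda \setminus \Gamma}$ equals $I - T_\Gamma$, and therefore $(e_\lambda)_{\lambda \in \Lambda \setminus \Gamma}$ is a frame for $\mathcal{H}$ if and only if $\| T_\Gamma \| < 1$. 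Writing $T_\Gamma = V_\Gamma V_\Gamma^*$ for the synthesis operator $V_\Gamma : \ell^2(\Gamma) \to \mathcal{H}$, one has $\| T_\Gamma \| = \| V_\Gamma^* V_\Gamma \| = \| \mathbb{G}_\Gamma \|$, where $\mathbb{G}_\Gamma = (\langle e_\mu, e_\lambda \rangle)_{\lambda, \mu \in \Gamma}$ is the Gramian of the subfamily, acting on $\ell^2(\Gamma)$.

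Granting this, the implication (i)$\Rightarrow$(ii) is immediate: if $\Gamma \subseteq \Lambda$ is infinite and $(e_\lambda)_{\lambda \in \Lambda \setminus \Gamma}$ is a frame, then $\| T_\Gamma \| = 1 - c$ for some $c > 0$; since $\langle \cdot, e_\lambda \rangle e_\lambda \leq T_\Gamma$ for each $\lambda \in \Gamma$, this gives $a_\lambda = \| \langle \cdot, e_\lambda \rangle e_\lambda \| \leq 1 - c$ for all $\lambda \in \Gamma$, so (ii) holds with $\Lambda_\alpha := \Gamma$ and $\alpha := 1 - c$.

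For the converse, I would construct $\Gamma$ inside the given infinite set $\Lambda_\alpha$ by a greedy near-orthogonalisation so that $\mathbb{G}_\Gamma$ becomes strictly diagonally dominant with every absolute row sum below $1$, and then invoke Schur's test. Enumerate $\lambda_1, \lambda_2, \dots \in \Lambda_\alpha$ recursively: given $\lambda_1, \dots, \lambda_n$, Bessel's inequality for the Parseval frame yields $\sum_{\lambda \in \Lambda} |\langle e_{\lambda_k}, e_\lambda \rangle|^2 = a_{\lambda_k} \leq \alpha$ for each $k \leq n$, so for any prescribed positive threshold only finitely many indices $\lambda$ violate it; since $\Lambda_\alpha$ is infinite, we may choose $\lambda_{n+1} \in \Lambda_\alpha \setminus \{ \lambda_1, \dots, \lambda_n \}$ with $|\langle e_{\lambda_{n+1}}, e_{\lambda_k} \rangle| \leq \varepsilon_{n+1,k}$ for $k \leq n$, where the $\varepsilon_{m,k} > 0$ are fixed in advance subject to $\sum_{m > k} \varepsilon_{m,k} + \sum_{j < k} \varepsilon_{k,j} \leq (1-\alpha)/2$ for every $k$ (for instance $\varepsilon_{m,k} := (1-\alpha) 2^{-m-k-1}$). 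Setting $\Gamma := \{ \lambda_k : k \in \N \}$, the $k$-th absolute row sum of $\mathbb{G}_\Gamma$ is $a_{\lambda_k} + \sum_{j \neq k} |\langle e_{\lambda_j}, e_{\lambda_k} \rangle| \leq \alpha + (1-\alpha)/2 = (1+\alpha)/2 < 1$, uniformly in $k$, whence Schur's test gives $\| T_\Gamma \| = \| \mathbb{G}_\Gamma \| \leq (1+\alpha)/2 < 1$. Thus $(e_\lambda)_{\lambda \in \Lambda \setminus \Gamma}$, and hence $(g_\lambda)_{\lambda \in \Lambda \setminus \Gamma}$, is a frame for $\mathcal{H}$ with $\Gamma$ infinite, which is (i).

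I expect the substantive difficulty to lie entirely in the direction (ii)$\Rightarrow$(i), namely in producing an \emph{infinite} removed set with a uniform bound $\| T_\Gamma \| < 1$. The one-element facts are elementary, but neither iterating single removals nor simply passing to a tail of $\Lambda_\alpha$ suffices: $\| T_\Gamma \|$ can be $1$ even when $\sum_{\lambda \in \Gamma} a_\lambda = \infty$ with each $a_\lambda$ arbitrarily small (think of an orthonormal basis listed with large multiplicities), so bounds obtained by summing the $a_\lambda$ are worthless, and one is forced onto the near-orthogonality route above. The delicate point there is the bookkeeping, namely checking that the recursion can be run to infinity while simultaneously keeping all row sums of $\mathbb{G}_\Gamma$ below $1$; this is precisely what the finiteness-via-Bessel step combined with the summable choice of thresholds $\varepsilon_{m,k}$ delivers.
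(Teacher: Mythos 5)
Your proof is correct. Note that the paper itself offers no proof of this statement: it is imported verbatim from Balan--Casazza--Heil--Landau, \emph{Deficits and excesses of frames} (cited as \cite{balan2003deficits}, Corollary 5.7), so there is nothing in the text to compare against line by line. Your argument is a valid, self-contained reconstruction in the same spirit as the original: the reduction to the canonical Parseval frame $e_\lambda = S^{-1/2} g_\lambda$ (which preserves both the frame property of $(g_\lambda)_{\lambda \in \Lambda \setminus \Gamma}$ and the diagonal quantities $\langle g_\lambda, h_\lambda \rangle = \|e_\lambda\|^2$), the identification of ``removable'' with $\|T_\Gamma\| = \|\mathbb{G}_\Gamma\| < 1$, and, for (ii)$\Rightarrow$(i), the greedy extraction of an infinite almost-orthogonal subset of $\Lambda_\alpha$ using the Parseval identity $\sum_{\lambda} |\langle e_{\lambda_k}, e_\lambda\rangle|^2 = a_{\lambda_k} \leq \alpha$ to guarantee that each selection step excludes only finitely many candidates, followed by Schur's test with row sums bounded by $\alpha + (1-\alpha)/2 < 1$. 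All steps check out, including the summability bookkeeping for the thresholds $\varepsilon_{m,k}$. Your diagnosis of where the difficulty lies is also accurate: the easy direction is the operator inequality $\langle \cdot, e_\lambda\rangle e_\lambda \leq T_\Gamma$, and the substance is entirely in producing an \emph{infinite} $\Gamma$ with a uniform norm bound, which naive tail-passing or iterated single removals cannot deliver. The only cosmetic caveat is the degenerate case $\|T_\Gamma\| = 0$ in (i)$\Rightarrow$(ii), where $\alpha = 1 - c$ would leave $(0,1)$; there one simply takes any $\alpha \in (0,1)$.
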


\begin{theorem} \label{thm:infinite_removal}
Suppose $\pi(\Lambda) g$ is a frame for $\Hpi$ with $g \in \mathcal{B}^2_{\pi}$ and $D^+ (\Lambda) > d_{\pi}$.

There exists an infinite set $\Gamma \subseteq \Lambda$ such that $(\pi(\lambda) g )_{\lambda \in \Lambda \setminus \Gamma}$ is a frame for $\Hpi$.
\end{theorem}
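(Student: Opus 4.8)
The plan is to deduce Theorem~\ref{thm:infinite_removal} from the characterization in Theorem~\ref{thm:infinite_excess} together with the fundamental identity of Theorem~\ref{thm:fundamental_identity}. Let $(h_\lambda)_{\lambda\in\Lambda}$ denote the canonical dual frame of $\pi(\Lambda)g$. It suffices to produce $\alpha\in(0,1)$ and an infinite subset $\Lambda_\alpha\subseteq\Lambda$ with $\langle \pi(\lambda)g,h_\lambda\rangle\le\alpha$ for all $\lambda\in\Lambda_\alpha$; then condition (ii) holds, hence (i) gives the desired infinite $\Gamma$. So the whole task reduces to showing that the quantities $\langle\pi(\lambda)g,h_\lambda\rangle$ cannot all be bounded away from below by a constant arbitrarily close to $1$ when $D^+(\Lambda)>d_\pi$.

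First I would recall that $0<\langle\pi(\lambda)g,h_\lambda\rangle\le 1$ for all $\lambda$, and argue by contradiction: suppose no such $\alpha$ exists, i.e.\ for every $\alpha\in(0,1)$ the set $\{\lambda\in\Lambda:\langle\pi(\lambda)g,h_\lambda\rangle\le\alpha\}$ is finite. Equivalently, $\langle\pi(\lambda)g,h_\lambda\rangle\to 1$ along $\Lambda$ in the sense that for every $\veps>0$ all but finitely many $\lambda\in\Lambda$ satisfy $\langle\pi(\lambda)g,h_\lambda\rangle>1-\veps$. I would then feed this into the definition of the lower frame measure: since only finitely many $\lambda$ violate the bound $\langle\pi(\lambda)g,h_\lambda\rangle>1-\veps$, and since $\Lambda$ is relatively separated while $\#(\Lambda\cap xK_n)\to\infty$ uniformly in $x$ (because $D^-(\Lambda)>0$ as $\Lambda$ is relatively dense), the contribution of the exceptional finitely many points to the averages $\frac{1}{\#(\Lambda\cap xK_n)}\sum_{\lambda\in\Lambda\cap xK_n}\langle\pi(\lambda)g,h_\lambda\rangle$ is negligible as $n\to\infty$, uniformly in $x$. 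Hence $M^-(\mathcal{G}_\Lambda)\ge 1-\veps$ for every $\veps>0$, so $M^-(\mathcal{G}_\Lambda)=1$ (it is also $\le 1$ since each term is $\le 1$). But Theorem~\ref{thm:fundamental_identity} gives $M^-(\mathcal{G}_\Lambda)=d_\pi/D^+(\Lambda)<1$ by the hypothesis $D^+(\Lambda)>d_\pi$, a contradiction. Note $g\in\mathcal{B}^2_\pi$ is exactly the hypothesis needed to invoke Theorem~\ref{thm:fundamental_identity}.

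The main obstacle — really the only non-bookkeeping point — is making the ``negligible contribution of finitely many exceptional points'' argument uniform in $x\in G$. The subtlety is that a fixed finite exceptional set $E\subseteq\Lambda$ contributes at most $\#E$ to the numerator and the denominator $\#(\Lambda\cap xK_n)$ grows, but one must ensure the denominator grows uniformly in $x$; this is where $D^-(\Lambda)>0$ (guaranteed since a frame index set is relatively dense, by the lemma in Section~\ref{sec:notation} and \cite[Lemma~3.8]{pogorzelski2021leptin}) together with the nestedness $K_n\subseteq K_{n+1}$ is used: $\inf_x \#(\Lambda\cap xK_n)\ge c\,\mu_G(K_n)\to\infty$. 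Then $\big|\frac{1}{\#(\Lambda\cap xK_n)}\sum_{\lambda\in\Lambda\cap xK_n}\langle\pi(\lambda)g,h_\lambda\rangle - \frac{1}{\#(\Lambda\cap xK_n)}\sum_{\lambda\in(\Lambda\setminus E)\cap xK_n}\langle\pi(\lambda)g,h_\lambda\rangle\big|\le \#E/\inf_x\#(\Lambda\cap xK_n)\to 0$, and the remaining average is $\ge 1-\veps$ termwise. This yields $M^-(\mathcal{G}_\Lambda)\ge 1-\veps$ uniformly, completing the contradiction and hence the proof.
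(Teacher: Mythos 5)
Your argument is correct and follows essentially the same route as the paper: both reduce to condition (ii) of Theorem~\ref{thm:infinite_excess} by combining $M^-(\mathcal{G}_{\Lambda}) = d_{\pi}/D^+(\Lambda) < 1$ from Theorem~\ref{thm:fundamental_identity} with the observation that averages of terms in $(0,1]$ staying bounded away from $1$ force infinitely many terms to be bounded away from $1$. The only difference is presentational: the paper extracts the infinite set directly by a pigeonhole count on the windows $\Lambda \cap x_i K_{n_i}$, whereas you run the equivalent contrapositive (finitely many exceptional points cannot perturb the averages, using $\inf_x \#(\Lambda \cap xK_n) \to \infty$), which is a valid and correctly justified variant.
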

\begin{proof}
An application of Theorem \ref{thm:fundamental_identity} yields that $M^- (\mathcal{G}_{\Lambda}) = d_{\pi} / D^+ (\Lambda) < 1$, where the inequality follows by assumption. Therefore, there exists $\varepsilon > 0$ and sequences $(x_i)_{i \in \mathbb{N}}$ and $(n_i)_{i \in \mathbb{N}}$ in $G$ resp. $\mathbb{N}$ such that
\[
 \frac{1}{\# (\Lambda \cap x_i K_{n_i})} \sum_{\lambda \in \Lambda \cap x_i K_{n_i}} \langle \pi(\lambda) g , h_{\lambda} \rangle < 1 - 2\varepsilon
\]
for all $i \in \mathbb{N}$. Since $0 < \langle \pi(\lambda) g, h_{\lambda} \rangle \leq 1$, it follows that at least $\varepsilon / (1-\varepsilon) \cdot \# (\Lambda \cap x_i K_{n_i})$ of the terms $\langle \pi(\lambda)g, h_{\lambda} \rangle$, where $\lambda \in \Lambda \cap x_i K_{n_i}$, satisfy $\langle \pi(\lambda)g, h_{\lambda} \rangle \leq 1 - \varepsilon$. Therefore, there exists an infinite set $\Lambda' \subseteq \Lambda$ such that $\sup_{\lambda \in \Lambda'} \langle \pi(\lambda) g, h_{\lambda} \rangle \leq 1 - \varepsilon$. Hence, the conclusion follow by Theorem \ref{thm:infinite_excess}.
\end{proof}

 Theorem \ref{thm:infinite_removal} can also be deduced from a combination of Theorem \ref{thm:fundamental_identity} and the relation between excess and the ultrafilter frame measure function defined in \cite{balan2007measure};  see \cite[Theorem 4.4]{balan2007measure}.

\subsection{Positive density removal}
This section provides two results on the removal of sets with positive density, which is a stronger conclusion than the removal of merely infinite sets provided by Theorem \ref{thm:infinite_removal}.
The first result is the following necessary condition, which is an adaption of \cite[Proposition 2]{balan2006density} to the setting of the present paper.

\begin{proposition} \label{prop:positive_density_removal}
Suppose that $\pi(\Lambda) g$ is a frame for $\Hpi$ with $g \in \mathcal{B}^2_{\pi}$. If there exists a subset $\Gamma \subseteq \Lambda$ with density $D^- (\Gamma) > 0$ such that $(\pi(\lambda) g)_{\lambda \in \Lambda \setminus \Gamma}$ is a frame for $\Hpi$, then $D^+ (\Lambda) > d_{\pi}$.
\end{proposition}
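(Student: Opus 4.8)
The idea is to exploit the fundamental identity of Theorem~\ref{thm:fundamental_identity} applied to \emph{both} the frame $\pi(\Lambda)g$ and the sub-frame $\pi(\Lambda\setminus\Gamma)g$, and to compare the two associated frame measures. Write $(h_\lambda)_{\lambda\in\Lambda}$ for the canonical dual of $\pi(\Lambda)g$ and $(\tilde h_\lambda)_{\lambda\in\Lambda\setminus\Gamma}$ for the canonical dual of $\pi(\Lambda\setminus\Gamma)g$. By Theorem~\ref{thm:fundamental_identity}, $M^-(\mathcal{G}_{\Lambda\setminus\Gamma}) = d_\pi / D^+(\Lambda\setminus\Gamma)$, and since $\langle\pi(\lambda)g,\tilde h_\lambda\rangle\le 1$ we get $D^+(\Lambda\setminus\Gamma)\ge d_\pi$ (this is just Corollary~\ref{cor:density_frames} applied to the sub-frame). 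The crux is instead to extract strict inequality for $\Lambda$ from the fact that $\Gamma$ has positive lower density.

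First I would establish the additivity relation $D^+(\Lambda) \le D^+(\Lambda\setminus\Gamma) + D^+(\Gamma)$ and, more usefully, $D^+(\Lambda) \ge D^-(\Gamma) + $ (a lower bound coming from $\Lambda\setminus\Gamma$). Actually the cleaner route: since $\Lambda\setminus\Gamma$ is still relatively dense (it is a frame index set), $D^-(\Lambda\setminus\Gamma)>0$; combine this with $D^-(\Gamma)>0$ via the elementary inequality $D^+(\Lambda)\ge D^-(\Lambda\setminus\Gamma)+D^-(\Gamma)$, which holds because along any sequence $(x_i,n_i)$ computing $D^-(\Lambda\setminus\Gamma)$ one has $\#(\Lambda\cap x_iK_{n_i}) = \#((\Lambda\setminus\Gamma)\cap x_iK_{n_i}) + \#(\Gamma\cap x_iK_{n_i}) \ge \#((\Lambda\setminus\Gamma)\cap x_iK_{n_i}) + (\text{liminf of the }\Gamma\text{-count})\cdot\mu_G(K_{n_i})(1+o(1))$. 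By Corollary~\ref{cor:density_frames} applied to the sub-frame, $D^-(\Lambda\setminus\Gamma)\ge d_\pi$, hence $D^+(\Lambda)\ge D^-(\Lambda\setminus\Gamma) + D^-(\Gamma) \ge d_\pi + D^-(\Gamma) > d_\pi$, which is exactly the claim.

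So the proof reduces to one lemma on densities: if $\Lambda = (\Lambda\setminus\Gamma)\sqcup\Gamma$ is a disjoint union of relatively separated sets, then $D^+(\Lambda) \ge D^-(\Lambda\setminus\Gamma) + D^-(\Gamma)$. I would prove this by picking, for each $i$, a point $x_i$ and index $n_i$ with $\#((\Lambda\setminus\Gamma)\cap x_iK_{n_i})/\mu_G(K_{n_i})$ within $1/i$ of its infimum over $x$, arranging also that $n_i\to\infty$; then $\#(\Gamma\cap x_iK_{n_i})/\mu_G(K_{n_i}) \ge D^-(\Gamma) - o(1)$ by definition of $D^-(\Gamma)$ as an infimum-liminf, and adding gives $\#(\Lambda\cap x_iK_{n_i})/\mu_G(K_{n_i}) \ge D^-(\Lambda\setminus\Gamma) + D^-(\Gamma) - o(1)$, which is dominated by $D^+(\Lambda)$ since $D^+$ is a supremum-limsup.

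The main obstacle is a subtle one: $D^-(\Lambda\setminus\Gamma)$ and $D^-(\Gamma)$ are each defined via $\liminf_n\inf_x$, and for the additivity one needs the infima (or near-infima) to be attained \emph{simultaneously} at the same $(x_i,n_i)$. This is why I take a sequence nearly optimal for $\Lambda\setminus\Gamma$ and then merely use the cheap bound (definition as an infimum, so valid at \emph{every} $(x,n)$ for large $n$) for $\Gamma$; the asymmetry is essential and one must be careful that the $o(1)$ errors — coming both from the near-optimality gap $1/i$ and from the convergence of the $\Gamma$-density infimum — are uniform enough to pass to the limit. Once this bookkeeping is done, invoking Corollary~\ref{cor:density_frames} for the sub-frame is immediate and the conclusion $D^+(\Lambda) > d_\pi$ follows.
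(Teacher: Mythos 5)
Your argument is correct, but it takes a genuinely different route from the paper. You apply the density theorem (Corollary \ref{cor:density_frames}) to the \emph{residual} frame $\pi(\Lambda\setminus\Gamma)g$ to get $D^-(\Lambda\setminus\Gamma)\geq d_\pi$, and then invoke superadditivity of Beurling density over the disjoint decomposition $\Lambda=(\Lambda\setminus\Gamma)\sqcup\Gamma$ to conclude $D^+(\Lambda)\geq D^-(\Lambda\setminus\Gamma)+D^-(\Gamma)\geq d_\pi+D^-(\Gamma)>d_\pi$. The superadditivity lemma is sound, and in fact the ``simultaneity'' obstacle you worry about is a non-issue: for each fixed $n$ and \emph{every} $x$ one has
\[
\frac{\#(\Lambda\cap xK_n)}{\mu_G(K_n)}\;\geq\;\inf_{y}\frac{\#((\Lambda\setminus\Gamma)\cap yK_n)}{\mu_G(K_n)}+\inf_{y}\frac{\#(\Gamma\cap yK_n)}{\mu_G(K_n)},
\]
so taking $\sup_x$ on the left and letting $n\to\infty$ gives the inequality directly, with no need to select near-optimal sequences at all. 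The paper instead works with the canonical dual frame: it observes that deleting a single $\gamma$ from the Parseval-normalized frame leaves optimal lower bound $1-\langle\pi(\gamma)g,h_\gamma\rangle$, deduces that any removable $\Gamma$ must lie in $\Lambda'=\{\lambda:\langle\pi(\lambda)g,h_\lambda\rangle\leq 1-A\}$, and then bounds $D^-(\Lambda')\leq \frac{1}{A}D^+(\Lambda)(1-d_\pi/D^+(\Lambda))$ via the frame-measure identity. What each buys: your route is shorter, bypasses the dual frame entirely, and yields the clean (indeed sharper on $\Gamma$ itself) bound $D^-(\Gamma)\leq D^+(\Lambda)-d_\pi$; the paper's route yields the structural information that every positive-density removable set is confined to the sublevel set $\Lambda'$ of the diagonal of the cross-Gramian, together with the quantitative estimate \eqref{eq:lambda-A} on that set, in the spirit of \cite[Proposition 2]{balan2006density}.
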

\begin{proof}
Let $(h_{\lambda})_{\lambda \in \Lambda}$ be the canonical dual frame of $\pi(\Lambda) g$.
Suppose $\Gamma \subseteq \Lambda$ is as in the statement and that $(\pi(\lambda) g)_{\lambda \in \Lambda \setminus \Gamma}$ is a frame for $\Hpi$. Then also $(\frameop^{-1/2} \pi(\lambda) g)_{\lambda \in \Lambda \setminus \Gamma}$ is a frame for $\Hpi$ with lower frame bound, say, $A > 0$. Since $\frameop^{-1/2} \pi(\Lambda) g$ is a Parseval frame for $\Hpi$, given $\gamma \in \Gamma$, the optimal lower frame bound $A'_{\gamma} > 0$ of the frame $(\frameop^{-1/2} \pi (\lambda) g)_{\lambda \in \Lambda \setminus \{\gamma\}}$ is
\[ A'_{\gamma} = 1 - \|\frameop^{-1/2} \pi(\gamma) g \|_{\Hpi}^2 = 1 - \langle \pi(\gamma) g, \frameop^{-1} \pi(\gamma) g \rangle. \]
Therefore, it necessarily follows that $A \leq A_{\gamma}' = 1  - \langle \pi(\gamma) g, h_{\gamma} \rangle$ for all $\gamma \in \Gamma$, which implies that $\Gamma \subseteq \Lambda' := \{ \lambda \in \Lambda : \langle \pi(\lambda) g, h_{\lambda} \rangle \leq 1 - A \}$. Thus, $D^- (\Lambda') > 0$.

For showing that $D^+ (\Lambda) > d_{\pi}$, it now suffices to show the upper bound
\begin{align} \label{eq:lambda-A}
 D^- (\Lambda') \leq  \frac{1}{A} D^+(\Lambda) ( 1 - d_{\pi}/D^+ (\Lambda)).
\end{align}
The inequality \eqref{eq:lambda-A} is trivially satisfied whenever $d_{\pi} / D^+ (\Lambda) \leq 1 - A$. Assume therefore that $1 \geq d_{\pi} / D^+ (\Lambda) > 1 - A$.
We have for any $x \in G$ and compact $K \subseteq G$ such that $\Lambda \cap x K$ is nonempty,
\begin{align*}
& \frac{1}{\# (\Lambda \cap x K )} \sum_{\lambda \in \Lambda \cap x K} \langle \pi(\lambda) g, h_{\lambda} \rangle \\
    &  \leq  \frac{1}{\# (\Lambda \cap x K )}
    \bigg( \sum_{\lambda \in \Lambda' \cap x  K } \langle \pi(\lambda) g, h_{\lambda} \rangle + \sum_{\lambda \in \Lambda \setminus \Lambda' \cap x  K } \langle \pi(\lambda) g, h_{\lambda} \rangle \bigg) \\
    &\leq \frac{(1-A) \cdot  \# (\Lambda' \cap x  K ) + \# (\Lambda \setminus \Lambda' \cap x K )}{\# (\Lambda \cap x K )} \\
    &= \frac{ \# (\Lambda \cap x K ) - A \cdot  \# (\Lambda' \cap x K )}{\# (\Lambda \cap x  K )},
\end{align*}
which yields that
\begin{align} \label{Eqn=LastThing}
 \frac{\#( \Lambda' \cap x K )}{\mu_G (K) } \leq \frac{1}{A}  \frac{\#( \Lambda \cap x K )}{\mu_G (K) } \left(1 - \frac{1}{\# (\Lambda \cap x K )} \sum_{\lambda \in \Lambda \cap x K} \langle \pi(\lambda) g, h_{\lambda} \rangle \right).
\end{align}
Let $\varepsilon > 0$ be arbitrary. Choose a sequence of  $x_i \in G$  and  increasing $n_i  \in \mathbb{N}$ such that $\Lambda \cap x_i K_{n_i}$ is nonempty and
\begin{align*}
\left|  \frac{1}{\# (\Lambda \cap x_i K_{n_i} )} \sum_{\lambda \in \Lambda \cap x_i K_{n_i} } \langle \pi(\lambda) g, h_{\lambda} \rangle - M^- (\mathcal{G}_{\Lambda}) \right| < \varepsilon.
\end{align*}
There exists $j = j (\varepsilon) \in \mathbb{N}$ such that, for all $i \geq j$,
\[
D^- (\Lambda') - \varepsilon  \leq  \frac{\#( \Lambda' \cap x_i K_{n_i} )}{\mu_G (K_{n_i}) }.
\]
Combining this with the inequality \eqref{Eqn=LastThing} yields that
\begin{align}\label{Eqn=Fix}
 D^- (\Lambda') - \varepsilon \leq \frac{1}{A}  \frac{\#( \Lambda \cap x_i K_{n_i} )}{\mu_G (K_{n_i}) } (1 -  M^- (\mathcal{G}_{\Lambda})  + \varepsilon)
\end{align}
for all $i \geq j$. Therefore, by  Theorem \ref{thm:fundamental_identity},
\begin{align*}
 D^- (\Lambda') - \varepsilon  \leq \frac{1}{A}  D^+(\Lambda) \left(1 - M^- (\mathcal{G}_{\Lambda})  +\epsilon  \right) = \frac{1}{A}  D^+(\Lambda)  (1 - d_\pi/D^+(\Lambda) + \varepsilon  ).
\end{align*}
As $\varepsilon > 0$ was chosen arbitrary, this shows \eqref{eq:lambda-A} and finishes the proof.
\end{proof}

The last result shows that for a coherent frame $\pi(\Lambda) g$ with $g \in \mathcal{B}_{\pi}^1$ one can always remove a set of positive density yet leave a frame. For this, the following simple lemma will be used, cf. \cite[Lemma 5]{balan2006density}.

\begin{lemma}[\cite{balan2006density}] \label{lem:positive_density_removal}
Let $(g_{\lambda})_{\lambda \in \Lambda}$ be a frame for $\mathcal{H}$ with frame operator $S : \mathcal{H} \to \mathcal{H}$. For $\Gamma \subseteq \Lambda$, define the truncated coefficient operator $\coeffGam : \Hpi \to \ell^2 (\Gamma)$ by $\coeffGam  = (\langle \cdot, g_{\gamma}\rangle)_{\gamma \in \Gamma}$.
Then $(g_{\lambda})_{\lambda \in \Lambda \setminus \Gamma}$ is a frame for $\mathcal{H}$ if and only if $ \big\|\coeffGam \frameop^{-1} \coeffGam^* \big\|_{B(\ell^2)} < 1$.
\end{lemma}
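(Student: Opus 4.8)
The plan is to exploit that $(g_\lambda)_{\lambda\in\Lambda}$ is a frame, so that after applying $\frameop^{-1/2}$ the system $(\frameop^{-1/2} g_\lambda)_{\lambda\in\Lambda}$ is a \emph{Parseval} frame; this is the setting in which the computation is cleanest and from which the general case follows by conjugation. Concretely, write $P := \coeff \frameop^{-1} \coeff^*$, the orthogonal projection of $\ell^2(\Lambda)$ onto the range of $\coeff \frameop^{-1/2}$ (equivalently the range of $\coeff$, since $\frameop$ is invertible); that $P$ is a projection is the standard fact that $C S^{-1} C^* $ is idempotent and self-adjoint for any frame. Decompose $\ell^2(\Lambda) = \ell^2(\Gamma) \oplus \ell^2(\Lambda\setminus\Gamma)$ and write $P$ in block form with respect to this splitting; the $(\Gamma,\Gamma)$-corner of $P$ is precisely $\coeffGam \frameop^{-1}\coeffGam^*$ after identifying $\coeffGam$ with the compression of $\coeff$ to $\ell^2(\Gamma)$.

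The key step is to translate ``$(g_\lambda)_{\lambda\in\Lambda\setminus\Gamma}$ is a frame'' into an operator-theoretic statement about this corner. First I would note that $(g_\lambda)_{\lambda\in\Lambda\setminus\Gamma}$ is a frame for $\Hr$ if and only if the truncated coefficient operator $C_{g,\Lambda\setminus\Gamma}$ is bounded below, i.e. $C_{g,\Lambda\setminus\Gamma}^* C_{g,\Lambda\setminus\Gamma} \geq \delta I$ for some $\delta > 0$; the upper frame bound is automatic since removing vectors only decreases the frame operator. Now $C_{g,\Lambda} = C_{g,\Gamma} \oplus C_{g,\Lambda\setminus\Gamma}$ (as a map into the orthogonal direct sum), so $\frameop = \coeff^*\coeff = \coeffGam^*\coeffGam + C_{g,\Lambda\setminus\Gamma}^* C_{g,\Lambda\setminus\Gamma}$, whence $C_{g,\Lambda\setminus\Gamma}^* C_{g,\Lambda\setminus\Gamma} = \frameop - \coeffGam^*\coeffGam$. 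Conjugating by $\frameop^{-1/2}$, this operator is bounded below iff $I - \frameop^{-1/2}\coeffGam^*\coeffGam\frameop^{-1/2}$ is bounded below, i.e. iff $\|\frameop^{-1/2}\coeffGam^*\coeffGam\frameop^{-1/2}\|_{B(\Hr)} < 1$. Finally, since $\frameop^{-1/2}\coeffGam^*\coeffGam\frameop^{-1/2}$ and $\coeffGam\frameop^{-1}\coeffGam^*$ are of the form $T^*T$ and $TT^*$ respectively with $T = \coeffGam\frameop^{-1/2}$, they have the same operator norm, and the equivalence follows.

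The main obstacle—really the only point requiring care—is the bookkeeping around the claim that the upper frame bound for $(g_\lambda)_{\lambda\in\Lambda\setminus\Gamma}$ is unproblematic and that boundedness below is genuinely equivalent to the norm strictly bounding below $1$ rather than merely $\leq 1$: one must use that $\frameop^{-1/2}\coeffGam^*\coeffGam\frameop^{-1/2}$ is a positive operator whose spectrum lies in $[0,1]$ (since $\coeffGam^*\coeffGam \leq \frameop$), so that ``$I$ minus it is bounded below'' forces the top of the spectrum to be $<1$, and conversely. This is where the Parseval normalization pays off, since it makes the inequality $\coeffGam^*\coeffGam \leq \frameop$ transparent. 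Everything else is the routine identity $\|T^*T\| = \|TT^*\|$ and the block decomposition, which I would state without belaboring.
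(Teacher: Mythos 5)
Your argument is correct and is essentially the standard proof of this lemma from Balan--Casazza--Heil--Landau, which the paper cites rather than reproves: the identity $C_{g,\Lambda\setminus\Gamma}^* C_{g,\Lambda\setminus\Gamma} = \frameop - \coeffGam^*\coeffGam = \frameop^{1/2}\big(I - \frameop^{-1/2}\coeffGam^*\coeffGam\,\frameop^{-1/2}\big)\frameop^{1/2}$, the fact that a positive operator $A$ satisfies $I - A \geq \delta I$ for some $\delta>0$ iff $\|A\|<1$, and $\|T^*T\|=\|TT^*\|$ for $T=\coeffGam\frameop^{-1/2}$ give exactly the stated equivalence. The opening discussion of the projection $\coeff\frameop^{-1}\coeff^*$ and its block decomposition is harmless but never actually used---the congruence argument in your second paragraph is already self-contained---and the only point worth writing out explicitly is that boundedness below is preserved under congruence by the invertible operator $\frameop^{1/2}$.
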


\begin{theorem}
Suppose $\pi(\Lambda) g$ is a frame for $\Hpi$ with $g \in \mathcal{B}^1_{\pi}$ and $D^- (\Lambda) > d_{\pi}$. Then there exists $\Gamma \subseteq \Lambda$ such that $D^- (\Gamma) > 0$ and $(\pi(\lambda) g)_{\lambda \in \Lambda \setminus \Gamma}$ is a frame for $\Hpi$.
\end{theorem}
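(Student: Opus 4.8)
The plan is to follow the overall strategy of \cite{balan2006density}: reduce, via Lemma \ref{lem:positive_density_removal}, to a norm bound for a compression of the orthogonal projection attached to the frame, and then make its off‑diagonal part small by spreading out the set to be removed. The one ingredient of \cite{balan2006density} that is unavailable here — spectral invariance of the relevant matrix algebra, which there forces the canonical dual window to be localized — I would replace by an approximation of the spectral projection by \emph{polynomials} in the Gramian, with the approximation error frozen before the geometry of the removed set is chosen. To begin, write $g_\lambda := \pi(\lambda)g$, let $(h_\lambda)_{\lambda\in\Lambda}$ be the canonical dual frame, and put $p_\lambda := \langle g_\lambda,h_\lambda\rangle\in(0,1]$. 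Since $D^-(\Lambda)>d_\pi$, Theorem \ref{thm:fundamental_identity} gives $M^+(\mathcal{G}_{\Lambda}) = d_\pi/D^-(\Lambda)<1$; choose $c$ with $M^+(\mathcal{G}_{\Lambda})<c<1$ and $\delta\in(0,1-c)$. For all large $n$ and all $x\in G$ the average of $p_\lambda$ over $\Lambda\cap xK_n$ is $\le c$, so a Markov‑type count shows that at least a fraction $\tfrac{1-\delta-c}{1-\delta}$ of the indices in each block $\Lambda\cap xK_n$ lies in $\Lambda_\delta := \{\lambda\in\Lambda : p_\lambda\le 1-\delta\}$; hence $D^-(\Lambda_\delta)\ge \tfrac{1-\delta-c}{1-\delta}D^-(\Lambda)>0$, and $\Lambda_\delta$ is relatively separated and relatively dense. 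By Lemma \ref{lem:positive_density_removal} it now suffices to produce $\Gamma\subseteq\Lambda_\delta$ with $D^-(\Gamma)>0$ and $\|\coeffGam\frameop^{-1}\coeffGam^*\|_{B(\ell^2)}<1$. Writing $R_\Gamma\colon\ell^2(\Lambda)\to\ell^2(\Gamma)$ for the coordinate restriction, one has $\coeffGam = R_\Gamma\coeff$, so $\coeffGam\frameop^{-1}\coeffGam^* = R_\Gamma P R_\Gamma^*$ is the principal submatrix on $\Gamma$ of the orthogonal projection $P := \coeff\frameop^{-1}\coeff^*$ of $\ell^2(\Lambda)$ onto $\overline{\coeff(\Hpi)}$, whose diagonal entries are $P_{\lambda\lambda}=p_\lambda$. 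Thus $R_\Gamma P R_\Gamma^*$ has all diagonal entries $\le 1-\delta$ whenever $\Gamma\subseteq\Lambda_\delta$, and the point is to control its off‑diagonal part.

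Next I would exploit that the Gramian operator $\mathcal{G}:=\coeff\coeff^*$ has entries $\mathcal{G}_{\lambda\mu}=\langle g_\mu,g_\lambda\rangle$, and that $g\in\mathcal{B}^1_{\pi}$ together with Lemma \ref{lem:bettervectors} give $V_g g\in W(L^1)$, whence $|\mathcal{G}_{\lambda\mu}|\le w(\lambda^{-1}\mu)$ for a fixed continuous $w\in W^L(L^1)$ (the projective cocycle only affects phases). Matrices $M$ on $\ell^2(\Lambda)$ admitting such a bound $|M_{\lambda\mu}|\le v(\lambda^{-1}\mu)$ with $v\in W^L(L^1)$ continuous form a Banach $*$‑algebra $\mathcal{A}\hookrightarrow B(\ell^2(\Lambda))$: closedness under products follows by turning a sum over $\Lambda$ into an integral — using the restriction estimate of Lemma \ref{lem:restriction} in its $\ell^1$‑form, together with \eqref{eq:maximal_convolution} and $(W^L(L^1))^\vee\ast W^L(L^1)\hookrightarrow W(L^1)\hookrightarrow W^L(L^1)$ — and uses that $\Rel(\gamma^{-1}\Lambda)=\Rel(\Lambda)$ for all $\gamma$. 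Consequently every real polynomial $q(\mathcal{G})$ lies in $\mathcal{A}$ with a continuous decay profile $W_q\in W^L(L^1)$ depending only on $q$ and $w$. I would \emph{not} claim $P\in\mathcal{A}$ — that would be a spectral‑invariance statement, which fails for $G$ of exponential growth — but use only that $\sigma_{B(\ell^2)}(\mathcal{G})\subseteq\{0\}\cup[A,B]$, with $A\le B$ the frame bounds, so that $P$ is the spectral projection of $\mathcal{G}$ for $[A,B]$: given $\varepsilon>0$, polynomial approximation on the compact set $\{0\}\cup[A,B]$ yields a real $q$ with $|q(0)|<\varepsilon$ and $\sup_{[A,B]}|q-1|<\varepsilon$, hence $\|q(\mathcal{G})-P\|_{B(\ell^2)}<\varepsilon$ by the spectral theorem.

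Now fix $\varepsilon:=\delta/8$ and the resulting $q$, hence $W_q$. By the $\ell^1$‑restriction estimate applied to the relatively separated sets $\gamma^{-1}\Lambda$, there is a compact symmetric $K\subseteq G$ with $\sum_{\mu\in(\gamma^{-1}\Lambda)\cap K^c}W_q(\mu)<\delta/8$ for all $\gamma\in G$ (the bound $\tfrac{\Rel(\Lambda)}{\mu_G(Q)}\int_{K^c Q}M^L W_q$ being independent of $\gamma$). Choose $\Gamma\subseteq\Lambda_\delta$ maximal with $\gamma^{-1}\gamma'\notin K$ for distinct $\gamma,\gamma'\in\Gamma$ (Zorn); maximality gives $\Lambda_\delta\subseteq\Gamma K$, so $\Gamma$ is relatively dense, and hence $D^-(\Gamma)>0$. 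For $\gamma\in\Gamma$ one has $\Gamma\setminus\{\gamma\}\subseteq\Lambda\cap(\gamma K)^c$, so $\sum_{\gamma'\in\Gamma\setminus\{\gamma\}}|q(\mathcal{G})_{\gamma\gamma'}|<\delta/8$, while $|q(\mathcal{G})_{\gamma\gamma}|\le p_\gamma+\|q(\mathcal{G})-P\|\le 1-\delta+\delta/8$. The Schur test for the self‑adjoint matrix $R_\Gamma q(\mathcal{G})R_\Gamma^*$ then yields
\[ \|\coeffGam\frameop^{-1}\coeffGam^*\| = \|R_\Gamma P R_\Gamma^*\| \le \|R_\Gamma q(\mathcal{G})R_\Gamma^*\| + \|q(\mathcal{G})-P\| \le \Big(1-\delta+\tfrac{\delta}{8}+\tfrac{\delta}{8}\Big)+\tfrac{\delta}{8} < 1 , \]
and Lemma \ref{lem:positive_density_removal} completes the proof.

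I expect the main obstacle to be the second step: establishing that $\mathcal{A}$ is a Banach algebra with \emph{quantitative and uniform} control over the decay profiles of products, so that one fixed polynomial $q$ produces one fixed $W_q\in W^L(L^1)$ \emph{before} the separation scale $K$ is selected. This is precisely where \cite{balan2006density} invokes spectral invariance of a matrix algebra (which would place $P$ directly in $\mathcal{A}$); freezing the polynomial and its approximation error ahead of the geometry of $\Gamma$ is what I expect to make the argument robust to exponential growth. The residual work — the Wiener‑amalgam bookkeeping around the projective cocycle, continuity of the profiles, and the uniformity in $\gamma$ of the tail estimates — should be routine within the framework of Section \ref{sec:notation}.
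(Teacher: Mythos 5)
Your proposal is correct and follows essentially the same route as the paper: establish $D^-(\Lambda_\alpha)>0$ via Theorem \ref{thm:fundamental_identity}, approximate $\coeff\frameop^{-1}\coeff^*$ by a polynomial in the Gramian whose entries inherit a $W(L^1)$ envelope, extract a maximal separated $\Gamma$ inside $\Lambda_\alpha$ so that the off-diagonal part is small, and conclude via Lemma \ref{lem:positive_density_removal} and a Schur-type bound. The only (cosmetic) difference is that you invoke general polynomial approximation of the spectral projection on $\{0\}\cup[A,B]$, whereas the paper rescales so that $B<2$ and truncates the Neumann series $\sum_j(I-\coeff\coeff^*)^j\coeff\coeff^*$ — which is exactly the polynomial $q_N(t)=1-(1-t)^{N+1}$ — and the envelope-stability of polynomials in the Gramian that you propose to prove by hand is outsourced in the paper to \cite[Proposition 4.6]{romero2021dual}.
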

\begin{proof}
By re-scaling $\pi(\Lambda) g$ if necessary, it may be assumed that $\pi(\Lambda) g$ is a frame with frame bounds $0 < A < B <2$.
Since $g \in \mathcal{B}^1_{\pi} \subseteq \mathcal{B}^2_{\pi}$ (cf. Lemma \ref{lem:bettervectors}) and $D^- (\Lambda) > d_{\pi}$, it follows by Theorem \ref{thm:fundamental_identity} that $M^+ (\mathcal{G}_{\Lambda}) = d_{\pi} / D^- (\Lambda) < 1$. Fix $\alpha \in (0,1)$ such that $M^+ (\mathcal{G}_{\Lambda}) < \alpha < 1$.
\\~\\
\textbf{Step 1.} In this step, it will be shown that the set $\Lambda_{\alpha} := \{ \lambda \in \Lambda : \langle \pi(\lambda) g, h_{\lambda} \rangle < \alpha \}$ has positive lower Beurling density.
It follows from the definition of $\Lambda_{\alpha}$ that for $x \in G$ and   compact  $K \subseteq G$ such that $\Lambda \cap x K$ is non-empty,
\begin{align*}
  & \frac{1}{\# (\Lambda \cap x K)} \sum_{\lambda \in \Lambda \cap x K} \langle \pi(\lambda) g, h_{\lambda} \rangle \\
    &= \frac{1}{\# (\Lambda \cap x K)} \bigg( \sum_{\lambda \in \Lambda_{\alpha} \cap x K} \langle \pi(\lambda) g, h_{\lambda} \rangle + \sum_{\lambda \in \Lambda \setminus \Lambda_{\alpha} \cap x K} \langle \pi(\lambda) g, h_{\lambda} \rangle \bigg) \\
    &\geq \frac{1}{\# (\Lambda \cap x K)} \bigg( \sum_{\lambda \in \Lambda_{\alpha} \cap x K} 0 + \sum_{\lambda \in \Lambda \setminus \Lambda_{\alpha} \cap x K} \alpha  \bigg) \\
    &= \alpha \frac{\# (\Lambda \cap x K ) - \# (\Lambda_{\alpha} \cap x K)}{\# (\Lambda \cap x K )}.
\end{align*}
Hence,
\begin{align*}
  \frac{\# (\Lambda_{\alpha} \cap x K )}{\mu_G (K)} \geq \bigg( 1 -  \alpha^{-1} \frac{1}{ \# (\Lambda \cap x K)} \sum_{\lambda \in \Lambda \cap x K} \langle \pi(\lambda) g, h_{\lambda} \rangle  \bigg) \frac{\# (\Lambda \cap x K  )}{\mu_G (K)}.
\end{align*}
Let $\varepsilon > 0$ be arbitrary. Take a sequence of $x_i \in G$ and increasing $n_i \in \mathbb{N}$ with  $\Lambda \cap x_i K_{n_i}$   nonempty such that
\[
\left| \frac{1}{  \# (\Lambda \cap x_i K_{n_i})} \sum_{\lambda \in \Lambda \cap x_i K_{n_i}} \langle \pi(\lambda) g, h_{\lambda} \rangle - M^+ (\mathcal{G}_{\Lambda})  \right| < \varepsilon.
\]
Choose $i$ sufficiently large such that
\[
D^- (\Lambda) - \varepsilon \leq  \frac{\# (\Lambda \cap x_i K_{n_i}  )}{\mu_G (K_{n_i})}.
\]
Then we find that
\[
  \frac{\# (\Lambda_{\alpha} \cap x_i K_{n_i} )}{\mu_G (K_{n_i})} \geq    \big( 1 -  \alpha^{-1} (  M^+ (\mathcal{G}_{\Lambda}) - \varepsilon )   \big)  \frac{\# (\Lambda \cap x_i K_{n_i}  )}{\mu_G (K_{n_i})} \geq  \big( 1 -  \alpha^{-1} ( M^+ (\mathcal{G}_{\Lambda}) - \varepsilon) \big)  (D^- (\Lambda) - \varepsilon).
\]
As by assumption $M^+ (\mathcal{G}_{\Lambda}) < \alpha$, $D^- (\Lambda) > d_\pi$ and $\varepsilon >0$ may be chosen arbitrary chosen arbitrarily small, this shows that $D^- (\Lambda_{\alpha}) > 0$.
\\~\\
\textbf{Step 2.} This step provides a convenient expression for $\coeff \frameop^{-1} \coeff^*$ to apply Lemma \ref{lem:positive_density_removal}.
 For this, recall that the frame operator $\frameop$ is positive with $0 < A \leq \frameop \leq B < 2$, so that $\| I - \frameop \|_{B(\Hpi)} < 1$. Therefore, $\frameop^{-1}$ can be expanded as
\[
\frameop^{-1} = \sum_{j = 0}^{\infty} (I - \frameop)^j = \sum_{j = 0}^{\infty} (I - \coeff^* \coeff)^j.
\]
Since $\coeff (I - \coeff^* \coeff) = (I - \coeff \coeff^*) \coeff$, it follows by induction that
\[ \coeff (I - \coeff^* \coeff)^j = (I - \coeff \coeff^*)^j \coeff, \quad j \in \mathbb{N}, \]
and hence
\[
\coeff \frameop^{-1} \coeff^* = \coeff \sum_{j = 0}^{\infty} (I - \coeff^* \coeff)^j \coeff^* =  \sum_{j = 0}^{\infty} (I - \coeff \coeff^*)^j \coeff \coeff^*
\]
with convergence in the operator norm.  For $N \in \mathbb{N} \cup \{\infty\}$, define $M^{(N)} \in \mathbb{C}^{\Lambda \times \Lambda}$ by
\[ M^{(N)} := \sum_{j = 0}^N (I - \coeff \coeff^*)^j \coeff \coeff^*\]
and write $M^{(N)} = D^{(N)} + R^{(N)}$, where $D^{(N)}$ is the diagonal part of $M^{(N)}$. Note that, in particular, we have that $M^{(\infty)} = \coeff \frameop^{-1} \coeff^*$.
\\~\\
\textbf{Step 3.}
This step will show the existence of a subset $\Gamma \subseteq \Lambda_{\alpha}$ such that $D^- (\Gamma) > 0$ and $\| \coeffGam \frameop^{-1} \coeffGam^* \|_{B(\ell^2)} < 1$. It follows then by Lemma \ref{lem:positive_density_removal} that $(\pi(\lambda) g)_{\lambda \in \Lambda \setminus \Gamma}$ is a frame for $\Hpi$.
Throughout this step, fix $0 < \varepsilon < (1-\alpha)/3$ and choose $N \geq 1$ such that
\begin{align} \label{eq:MN-eps}
\big\| \coeff \frameop^{-1} \coeff^* - M^{(N)} \big\|_{B(\ell^2)}  = \big\| M^{(\infty)} - M^{(N)} \big\|_{B(\ell^2)}  \leq \varepsilon.
\end{align}

Since $g \in \mathcal{B}^1_{\pi}$ by assumption, it follows that the matrix $(\langle \pi(\lambda) g, \pi(\lambda') g \rangle )_{\lambda, \lambda' \in \Lambda}$ associated to the operator $\coeff \coeff^* : \ell^2 (\Lambda) \to \ell^2 (\Lambda)$ satisfies
\[ |\langle \pi(\lambda)g, \pi(\lambda') g \rangle | = \Phi (\lambda^{-1} \lambda') = \Phi ((\lambda')^{-1} \lambda), \quad \lambda, \lambda' \in \Lambda, \]
for $\Phi := |V_g g| \in W(G)$. The matrix $M^{(N)}$ being a sum of products involving $(\langle g_{\lambda}, g_{\lambda'} \rangle )_{\lambda, \lambda' \in \Lambda}$ and $I$, it follows therefore
by \cite[Proposition 4.6]{romero2021dual} that there exists $\Theta \in W(G)$ such that
\[
|M^{(N)}_{\lambda, \lambda'} | \leq \min \{ \Theta ((\lambda')^{-1} \lambda), \Theta(\lambda^{-1} \lambda') \}, \quad \lambda, \lambda' \in \Lambda.
\]
Choose a compact symmetric unit neighborhood $U_1 \subseteq G$ such that
\begin{align} \label{eq:amalgam-eps}
\| \Theta \cdot \mathds{1}_{U_1^c} \|_{W} \leq \varepsilon \cdot \bigg(\frac{\Rel(\Lambda)}{\mu_G (Q)} \bigg)^{-1},
\end{align}
where $Q \subseteq G$ is the fixed compact symmetric unit neighborhood.
On the other hand, since $D^- (\Lambda_{\alpha} ) > 0$, there also exists a compact symmetric unit neighborhood $U_2 \subseteq G$ such that $G = \bigcup_{\lambda \in \Lambda_{\alpha}} \lambda U_2$. Set $U := U_2 U_1$ and let $\Gamma \subseteq \Lambda_{\alpha}$ be a maximal family such that $(\gamma U_1)_{\gamma \in \Gamma}$ consists of pairwise disjoint sets.
For showing that $D^- (\Gamma) > 0$, it suffices to show that $\Gamma$ is  relatively dense, see, e.g., \cite[Lemma 3.8]{pogorzelski2021leptin}. Arguing by contradiction,
assume that there exists $x \in G$ such that $\Gamma \cap x U = \emptyset$. Since $\Lambda_{\alpha}$ is $U_2$-dense, there exists $\lambda_0 \in \Lambda_{\alpha} \cap x U_2$. Note that $\lambda_0 \notin \Gamma$. Set $\Gamma_0 := \Gamma \cup \{\lambda_0\}$.
By maximality of $\Gamma$, the family $(\gamma U_1)_{\gamma \in \Gamma_0}$ is not pairwise disjoint, so that there exists
$
 \gamma_0 \in \lambda_0 U_1 \cap  \Gamma_0 \setminus \{\lambda_0\}$.
Since $\gamma_0 \in \lambda_0 U_1$ and $\lambda_0 \in x U_2$, it follows that
\[ \gamma_0 \in \Gamma \cap x U_2 U_1 = \Gamma \cap x U, \]
which contradicts that $\Gamma \cap x U = \emptyset$. Thus, $\Gamma$ is $U$-dense, and $D^- (\Gamma) > 0$.

It remains to show that $\| \coeffGam \frameop^{-1} \coeffGam^* \|_{B(\ell^2)} < 1$. For this, note first that
\begin{align*}
    \| \coeffGam \frameop^{-1} \coeffGam^* \|_{B(\ell^2)}
    &= \| P_{\Gamma} \coeff \frameop^{-1} \coeff^* P_{\Gamma} \|_{B(\ell^2)} ,
    \end{align*}
where $P_{\Gamma} : \ell^2 (\Lambda) \to \ell^2 (\Lambda)$ is the projection operator given by $(P_{\Gamma} c)_{\gamma} = c_{\gamma}$ for $\gamma \in \Gamma$, and $0$ otherwise. Using the notation from Step 2, this yields
    \begin{align*}
    \| \coeffGam \frameop^{-1} \coeffGam^* \|_{B(\ell^2)}
     &\leq \| P_{\Gamma} R^{(N)} P_{\Gamma} \|_{B(\ell^2)}
    + \| P_{\Gamma} D^{(N)} P_{\Gamma} \|_{B(\ell^2)}
    + \| P_{\Gamma} (M^{(\infty)} - M^{(N)} ) P_{\Gamma} \|_{B(\ell^2)}  \\
    &\leq \| P_{\Gamma} R^{(N)} P_{\Gamma} \|_{B(\ell^2)}   + \| P_{\Gamma} D^{(\infty)} P_{\Gamma} \|_{B(\ell^2)} \\
 & \quad \quad    + \| P_{\Gamma} (D^{(\infty)} - D^{(N)}) P_{\Gamma} \|_{B(\ell^2)}  + \| P_{\Gamma} (M^{(\infty)} - M^{(N)} ) P_{\Gamma} \|_{B(\ell^2)}.
    \end{align*}
By Equation \eqref{eq:MN-eps}, it follows that
$\| P_{\Gamma} (D^{(\infty)} - D^{(N)}) P_{\Gamma} \|_{B(\ell^2)} \leq \varepsilon$, which also implies that
$\| P_{\Gamma} (M^{(\infty)} - M^{(N)} ) P_{\Gamma} \|_{B(\ell^2)} \leq \varepsilon$.
In addition, since $M^{(\infty)} = (\langle g_{\lambda}, h_{\lambda'} \rangle)_{\lambda, \lambda' \in \Lambda}$, it follows by definition of $\Lambda_{\alpha}$ that
\[ \| P_{\Gamma} D^{(\infty)} P_{\Gamma} \|_{B(\ell^2)} \leq \sup_{\gamma \in \Gamma} \; \langle g_{\gamma}, h_{\gamma} \rangle \leq \alpha. \]
Lastly, consider the matrix $(R^{(N)}_{\gamma, \gamma'})_{\gamma, \gamma' \in \Gamma}$. For $\gamma, \gamma' \in \Gamma$ with $\gamma \neq \gamma'$, it follows that $(\gamma')^{-1} \gamma \notin U_1$ since the family $(\gamma U_1)_{\gamma \in \Gamma}$ is pairwise disjoint by construction of $\Gamma$. Thus,
\[
|R^{(N)}_{\gamma, \gamma'}| \leq \min \{ \Theta ((\gamma')^{-1} \gamma), \Theta(\gamma^{-1} \gamma') \}, \quad \gamma \neq \gamma', \; \gamma, \gamma' \in \Gamma.
\]
On the other hand, $|R^{(N)}_{\gamma, \gamma}| = 0$ by definition. Therefore, setting $\Theta' := \Theta \cdot \mathds{1}_{U_1^c}$ yields
\[
|R^{(N)}_{\gamma, \gamma'}| \leq \min \{ \Theta' ((\gamma')^{-1} \gamma), \Theta' (\gamma^{-1} \gamma') \}, \quad  \gamma, \gamma' \in \Gamma.
\]
Applying \cite[Proposition 4.6]{romero2021dual} therefore gives
\[
\| P_{\Gamma} R^{(N)} P_{\Gamma} \|_{B(\ell^2)} \leq \frac{\Rel (\Gamma)}{\mu_G (Q)} \| \Theta' \|_W \leq \frac{\Rel (\Lambda)}{\mu_G (Q)} \| \Theta \cdot \mathds{1}_{U_1^c} \|_W \leq \varepsilon,
\]
where the last inequality follows by Equation \eqref{eq:amalgam-eps}.
In conclusion, a combination of the estimates above gives
$
    \| \coeffGam \frameop^{-1} \coeffGam^* \|_{B(\ell^2)}
    \leq \varepsilon + \alpha + \varepsilon + \varepsilon < 1,
$
which completes the proof.
\end{proof}

Theorem \ref{thm:infinite_removal} recovers \cite[Theorem 6]{balan2006density2} in the case of Gabor systems. In contrast to the proof of \cite[Theorem 6]{balan2006density2} (see \cite[Theorem 8]{balan2006density}), the proof provided above does not use techniques relying on spectral invariance, which are only available in settings with polynomial growth \cite{sun2007wiener}. The possibility of providing a proof without these techniques was mentioned in \cite[p.133]{balan2006density}.

\section*{Acknowledgements}
M.C. is supported by the NWO Vidi grant ‘Non-commutative harmonic analysis and rigidity of operator algebras’, VI.Vidi.192.018. J.v.V. gratefully acknowledges support from the Austrian Science Fund (FWF) project J-4445.


\begin{thebibliography}{10}

\bibitem{ascensi2014dilation}
G.~Ascensi, H.~G. Feichtinger, and N.~Kaiblinger.
\newblock Dilation of the {Weyl} symbol and {Balian}-low theorem.
\newblock {\em Trans. Am. Math. Soc.}, 366(7):3865--3880, 2014.

\bibitem{balan2011redundancy}
R.~Balan, P.~Casazza, and Z.~Landau.
\newblock Redundancy for localized frames.
\newblock {\em Isr. J. Math.}, 185:445--476, 2011.

\bibitem{balan2003deficits}
R.~{Balan}, P.~G. {Casazza}, C.~{Heil}, and Z.~{Landau}.
\newblock {Deficits and excesses of frames}.
\newblock {\em {Adv. Comput. Math.}}, 18(2-4):93--116, 2003.

\bibitem{balan2006density}
R.~Balan, P.~G. Casazza, C.~Heil, and Z.~Landau.
\newblock Density, overcompleteness, and localization of frames. {I}: {Theory}.
\newblock {\em J. Fourier Anal. Appl.}, 12(2):105--143, 2006.

\bibitem{balan2006density2}
R.~Balan, P.~G. Casazza, C.~Heil, and Z.~Landau.
\newblock Density, overcompleteness, and localization of frames. {II}: {Gabor}
  systems.
\newblock {\em J. Fourier Anal. Appl.}, 12(3):307--344, 2006.

\bibitem{balan2007measure}
R.~Balan and Z.~Landau.
\newblock Measure functions for frames.
\newblock {\em J. Funct. Anal.}, 252(2):630--676, 2007.

\bibitem{bedos2022smooth}
E.~B{\'e}dos, U.~Enstad, and J.~T. van Velthoven.
\newblock Smooth lattice orbits of nilpotent groups and strict comparison of
  projections.
\newblock {\em J. Funct. Anal.}, 283(6):48, 2022.
\newblock Id/No 109572.

\bibitem{corwin1990representations}
L.~J. Corwin and F.~P. Greenleaf.
\newblock {\em Representations of nilpotent {Lie} groups and their
  applications. {Part} 1: {Basic} theory and examples}, volume~18 of {\em Camb.
  Stud. Adv. Math.}
\newblock Cambridge University Press, 1990.

\bibitem{duffin1952class}
R.~J. Duffin and A.~C. Schaeffer.
\newblock A class of nonharmonic {Fourier} series.
\newblock {\em Trans. Am. Math. Soc.}, 72:341--366, 1952.

\bibitem{emerson1967covering}
W.~R. Emerson and F.~P. Greenleaf.
\newblock Covering properties and {F{{\o}}lner} conditions for locally compact
  groups.
\newblock {\em Math. Z.}, 102:370--384, 1967.

\bibitem{enstad2022dynamical}
U.~Enstad and S.~Raum.
\newblock A dynamical approach to non-uniform density theorems for coherent
  systems.
\newblock {\em Preprint. arXiv:2207.05125}, 2022.

\bibitem{enstad2022coherent}
U.~Enstad and J.~T. Van~Velthoven.
\newblock Coherent systems over approximate lattices in amenable groups.
\newblock {\em Preprint. arXiv:2208.05896}, 2022.

\bibitem{fendler2016on}
G.~Fendler and M.~Leinert.
\newblock On convolution dominated operators.
\newblock {\em Integral Equations Oper. Theory}, 86(2):209--230, 2016.

\bibitem{freeman2019discretization}
D.~Freeman and D.~Speegle.
\newblock The discretization problem for continuous frames.
\newblock {\em Adv. Math.}, 345:784--813, 2019.

\bibitem{fuehr2007sampling}
H.~F{\"u}hr and K.~Gr{\"o}chenig.
\newblock Sampling theorems on locally compact groups from oscillation
  estimates.
\newblock {\em Math. Z.}, 255(1):177--194, 2007.

\bibitem{fuehr2017density}
H.~{F\"uhr}, K.~{Gr\"ochenig}, A.~{Haimi}, A.~{Klotz}, and J.~L. {Romero}.
\newblock {Density of sampling and interpolation in reproducing kernel Hilbert
  spaces}.
\newblock {\em {J. Lond. Math. Soc., II. Ser.}}, 96(3):663--686, 2017.

\bibitem{grochenig2008homogeneous}
K.~{Gr\"ochenig}.
\newblock {The homogeneous approximation property and the comparison theorem
  for coherent frames}.
\newblock {\em {Sampl. Theory Signal Image Process.}}, 7(3):271--279, 2008.

\bibitem{groechenig2015deformation}
K.~Gr{\"o}chenig, J.~Ortega-Cerd{\`a}, and J.~L. Romero.
\newblock Deformation of {Gabor} systems.
\newblock {\em Adv. Math.}, 277:388--425, 2015.

\bibitem{grochenig2020balian}
K.~Gr{\"o}chenig, J.~L. Romero, D.~Rottensteiner, and J.~T. Van~Velthoven.
\newblock Balian-{Low} type theorems on homogeneous groups.
\newblock {\em Anal. Math.}, 46(3):483--515, 2020.

\bibitem{groechenig2018orthonormal}
K.~Gr{\"o}chenig and D.~Rottensteiner.
\newblock Orthonormal bases in the orbit of square-integrable representations
  of nilpotent {Lie} groups.
\newblock {\em J. Funct. Anal.}, 275(12):3338--3379, 2018.

\bibitem{kolountzakis1996structure}
M.~N. Kolountzakis and J.~C. Lagarias.
\newblock Structure of tilings of the line by a function.
\newblock {\em Duke Math. J.}, 82(3):653--678, 1996.

\bibitem{mitkovski2020density}
M.~Mitkovski and A.~Ramirez.
\newblock Density results for continuous frames.
\newblock {\em J. Fourier Anal. Appl.}, 26(4):26, 2020.
\newblock Id/No 56.

\bibitem{nielsen1983unitary}
O.~A. Nielsen.
\newblock {\em Unitary representations and coadjoint orbits of low-dimensional
  nilpotent {Lie} groups}, volume~63 of {\em Queen's Pap. Pure Appl. Math.}
\newblock Queen's University, Kingston, Ontario, 1983.

\bibitem{nitzan2020revisiting}
S.~{Nitzan} and A.~{Olevskii}.
\newblock {Revisiting Landau's density theorems for Paley-Wiener spaces}.
\newblock {\em {C. R., Math., Acad. Sci. Paris}}, 350(9-10):509--512, 2012.

\bibitem{oussa2019compactly}
V.~Oussa.
\newblock Compactly supported bounded frames on {Lie} groups.
\newblock {\em J. Funct. Anal.}, 277(6):1718--1762, 2019.

\bibitem{oussa2022orthonormal}
V.~Oussa.
\newblock Orthonormal bases arising from nilpotent actions.
\newblock {\em Preprint. ResearchGate DOI: 10.13140/RG.2.2.14369.28003}, 2022.

\bibitem{pogorzelski2021leptin}
F.~Pogorzelski, C.~Richard, and N.~Strungaru.
\newblock Leptin densities in amenable groups.
\newblock {\em J. Fourier Anal. Appl.}, 28(6):36, 2022.
\newblock Id/No 85.

\bibitem{romero2021dual}
J.~L. Romero, J.~T. van Velthoven, and F.~Voigtlaender.
\newblock On dual molecules and convolution-dominated operators.
\newblock {\em J. Funct. Anal.}, 280(10):56, 2021.
\newblock Id/No 108963.

\bibitem{sun2007wiener}
Q.~Sun.
\newblock Wiener's {Lemma} for infinite matrices.
\newblock {\em Trans. Am. Math. Soc.}, 359(7):3099--3123, 2007.

\bibitem{tessera2011inclusion}
R.~Tessera.
\newblock The inclusion of the {Schur} algebra in {{\(B(\ell^2)\)}} is not
  inverse-closed.
\newblock {\em Monatsh. Math.}, 164(1):115--118, 2011.

\bibitem{varadarajan1985geometry}
V.~S. Varadarajan.
\newblock Geometry of quantum theory. 2nd ed.
\newblock {Springer}-{Verlag}. 412 p., 1985.

\end{thebibliography}
\end{document}